\newtheorem{thm}{Theorem}
\newtheorem{lemma}[thm]{Lemma}
\newtheorem{theorem}[thm]{Theorem}
\newtheorem{cor}[thm]{Corollary}
\numberwithin{equation}{section}
\numberwithin{thm}{section}
\def\\{\cr}
\def\({\left(}
\def\){\right)}
\def\[{\left[}
\def\]{\right]}
\def\<{\langle}
\def\>{\rangle}
\def\fl#1{\left\lfloor#1\right\rfloor}
\def\rf#1{\left\lceil#1\right\rceil}
\def\cD{\mathcal D}
\def\F{\mathbb{F}}
\def\Z{\mathbb{Z}}
\def\cA{{\mathcal A}}
\def\cB{{\mathcal B}}
\def\cD{{\mathcal D}}
\def\cH{{\mathcal H}}
\def\cI{{\mathcal I}}
\def\cJ{{\mathcal J}}
\def\cN{{\mathcal N}}
\def\cS{{\mathcal S}}
\def\cT{{\mathcal T}}
\def\cU{{\mathcal U}}
\def\cV{{\mathcal V}}
\def\cW{{\mathcal W}}
\def\vec#1{\mathbf{#1}}
\def\fA{{\mathfrak A}}
\def\free{\ast}
\def\mand{\qquad \mbox{and} \qquad}
\begin{document}

\title[Binary digits of squarefree numbers and quadratic residues]
{Prescribing the binary digits of squarefree numbers and quadratic residues}

\author[R. Dietmann] {Rainer Dietmann}
\address{Department of Mathematics, Royal Holloway,
University of London, Egham, Surrey, TW20 0EX, United Kingdom}
\email{rainer.dietmann@rhul.ac.uk}

\author[C. Elsholtz]{Christian Elsholtz}
\address{Institute of Analysis and Computational Number Theory, Technische  Universit\"at  Graz, A-8010 Graz, Austria }  
 \email{elsholtz@math.tugraz.at}

\author[I. E. Shparlinski]{Igor E. Shparlinski} 
\address{Department of Pure Mathematics, University of New South Wales, 
Sydney, NSW 2052, Australia}
\email{igor.shparlinski@unsw.edu.au}
\begin{abstract} 
%We study the equidistribution of
%multiplicatively defined sets, such as the squarefree integers,
%quadratic non-residues or primitive roots, in sets
%which are described in an additive way, such as sumsets
%or Hilbert cubes.
%In more detail:
%\begin{enumerate}
%\item
%If one fixes any proportion  less than  $40\%$
%of the digits of all numbers of a given binary bit length, then the remaining set
%still has the asymptotically  expected number
%of squarefree integers.
%\item We investigate the distribution of primitive roots
%modulo a large prime $p$, establishing a new upper bound on
%the largest dimension of a Hilbert
%cube in the set of primitive roots, improving on a previous result
%of the authors.
%\item Finally, we study sumsets in finite fields and asymptotically find
%the expected number of quadratic residues and non-residues in
%such sumsets, given their cardinalities are big enough. This
%significantly improves
%on a recent result by Dartyge, Mauduit and S{\'a}rk{\"o}zy.
%\end{enumerate}
%Our approach introduces several new ideas, combining
%a variety of methods, such as bounds of exponential and
%character sums, geometry of numbers and additive combinatorics.
We study the equidistribution of
multiplicatively defined sets, such as the squarefree integers,
quadratic non-residues or primitive roots, in sets
which are described in an additive way, such as sumsets
or Hilbert cubes. 
In particular, we show that  if one  fixes
any proportion  less than  $40\%$
of the digits of all numbers of a given binary bit length, then the remaining set
still has the asymptotically  expected number
of squarefree integers.
Next, we investigate the distribution of primitive roots
modulo a large prime $p$, establishing a new upper bound on
the largest dimension of a Hilbert
cube in the set of primitive roots, improving on a previous result
of the authors.
Finally, we study sumsets in finite fields and asymptotically find
the expected number of quadratic residues and non-residues in
such sumsets, given their cardinalities are big enough. This 
significantly improves 
on a recent result by Dartyge, Mauduit and S{\'a}rk{\"o}zy.
Our approach introduces several new ideas, combining
a variety of methods, such as bounds of exponential and
character sums, geometry of numbers and additive combinatorics.
\end{abstract}

\subjclass[2010]{Primary 11A63, 11B30, 11N25; 
Secondary 11H06, 11L40, 11P70, 11T30}
\keywords{Digital problems, square-free numbers, non-residues, finite fields,
Hilbert cubes}

\maketitle

\section{Introduction}

\subsection{Motivation}
Given  an integer $n  \ge 2$,
we denote by 
$\cD_{n}$ the set of vectors $\vec{d} = (\delta_0, \ldots, \delta_{n-1})$
where $\delta_i \in \{\free, 0, 1\}$, $i =0, \ldots, n-1$, 
and for $\vec{d} \in \cD_n$
we consider 
the set 
$$
\cN_n(\vec{d}) = \left\{\sum_{i=0}^{n-1} d_i 2^i~:~ 
d_i \in \{0, 1\}\ \text{if}\ \delta_i=\free, 
\ d_i =\delta_i\ \text{otherwise}\right\}.
$$

Furthermore, for  integers $n >k \ge 1$ we denote 
by $\cD_{k,n}$ the subset of $n$-dimensional  vectors $\vec{d} \in \cD_n$
with exactly $k$ components of $\vec{d}$  that are fixed as either
$0$ or $1$ and $n-k$ components that are $\free$. We also use $\cD_{k,n}^*$ to denote the set 
of vectors  $\vec{d} \in \cD_{k,n}$ with $\delta_0 = 1$.
In particular, for $\vec{d} \in \cD_{k,n}^*$ all elements 
of $\cN_n(\vec{d})$ are odd. 

Various arithmetic properties of elements from $\cN_n(\vec{d})$
as well as of other integers with restricted digits 
have been studied in a number of works. 

We first recall that Bourgain~\cite{Bour1,Bour2}
has recently obtained 
several very strong results about prime numbers with 
prescribed binary digits, see also~\cite{HaKa}. For example, 
the result of~\cite{Bour2}
gives an asymptotic formula for the number of primes $p \in \cN_n(\vec{d})$
for very dense vectors  $\vec{d} \in  \cD_{k,n}^*$, more precisely, 
when $k \le cn$ for some absolute constant $c>0$, which is 
a dramatic improvement over the previous results of~\cite{Bour1,HaKa}. 
Mauduit and Rivat~\cite{MauRiv} have recently settled a 
problem of Gelfond about the distribution of primes
with the sums of digits in a prescribed arithmetic progression,  
see also~\cite{DMR}. Partially motivated by some cryptographic applications, 
the distribution and construction of 
RSA moduli and smooth numbers with some binary digits prescribed 
have been studied in~\cite{GraShp,Shp2}. Various results 
on prime divisors 
and other arithmetic properties of numbers with very few non-zero
binary digits can be found in~\cite{BaSh1,Bour0,EMS,Luca,Shp1,Shp3}. 
For a diverse variety of results on integers with various restrictions on their 
digits (for example, palindromes)
see~\cite{BaSh2,Col1,Col2,DrMa,Kon1,KMS,MaSha,MoShk} and references therein.

\subsection{Our results and methods} 
Here we combine a variety of methods, such as bounds on exponential and 
character sums, geometry of numbers, additive combinatorics, 
to derive new results about the arithmetic structure of elements 
of $\cN_n(\vec{d})$. Throughout, our goal is to treat $\vec{d} \in \cD_{k,n}$ with 
the ratio
$k/n$ as large as possible (that is, for thin sets of integers with
as large as possible proportion of pre-assigned digits). We believe that 
our ideas and  results can find application to several other problems
of this type. 

More precisely, in Section~\ref{sec:sqfr} we first study
the distribution of squarefree numbers in  sets
$\cN_n(\vec{d})$.
 Using some combinatorial arguments, the theory of lattice minima 
and a result of Bourgain~\cite[Lemma~4]{Bour2} we obtain an 
asymptotic formula for the number of squarefree integers 
$s \in \cN_n(\vec{d})$ for $\vec{d} \in  \cD_{k,n}^*$ provided that  
$k\le (2/5- \varepsilon) n$ for any fixed  $\varepsilon>0$. 
In Section~\ref{sec:Euler} we also give an asymptotic for the sums of values of the Euler function 
in essentially full range $k\le (1- \varepsilon) n$. 

Furthermore, we also estimate multiplicative character sums and
in  Section~\ref{sec:nonres} obtain 
results about the distribution of quadratic non-residues and primitive roots 
modulo $p$ among the elements of $\cN_n(\vec{d})$  for 
$\vec{d} \in  \cD_{k,n}$ provided that 
$k\le (1/2 - \varepsilon) n$ for any fixed  $\varepsilon>0$. 
This result complements those of~\cite{BaCoSh,DES1,DES2,OstShp}
where similar questions are considered for integers with various restrictions
on their binary digits (and also digits in other bases). 

Finally,   in Section~\ref{sec:Hilb}, we  consider a related question about quadratic residues and primitive roots in  {\it Hilbert cubes\/}.
For a prime power $r = p^n$, let $\F_r$ denote the finite field of $r$ elements.
For $a_0, a_1,
\ldots, a_d \in \F_p$ we define the {\it Hilbert cube\/} as
\begin{equation}
\label{eq:hilbert_cube}
\cH(a_0; a_1, \ldots, a_d) = \left\{ a_0 + \sum_{i=1}^d \vartheta_i a_i~:~
\vartheta_i \in \{0,1\}\right\}.
\end{equation}

We define $f(p)$ as the largest $d$ such that 
there are $a_0, a_1, \ldots, a_d \in \F_p$ with   pairwise distinct
$a_1, \ldots, a_d$ such that
$\cH(a_0; a_1, \ldots, a_d)$ does not contain a quadratic non-residue 
modulo $p$. Furthermore, we define $F(p)$ 
as the largest $d$ 
 such that there are $a_0, a_1, \ldots, a_d \in \F_p$ with pairwise distinct
$a_1, \ldots, a_d$ such that $\cH(a_0; a_1, \ldots, a_d)$ does not contain a 
primitive root modulo $p$. Clearly
$$
 f(p) \le F(p).
$$
 Hegyv\'{a}ri and S\'{a}rk\"{o}zy~\cite[Theorem~2]{HS} give the bound 
 $f(p) < 12 p^{1/4}$ for sufficiently large $p$, which has been improved
to
$$
F(p) \le p^{1/5+o(1)}
$$
as $p\to \infty$, by Dietmann, Elsholtz and Shparlinski~\cite[Theorem~1.3]{DES1}. 
Here we improve this further to 
$$
F(p) \le p^{3/19+o(1)}
$$
and recall that reducing the exponent below $1/8$
immediately implies an  improvement of the Burgess bound on the least primitive root
(note that $3/19 -1/8 = 0.0328\ldots$). 

As a further application of our method of Section~\ref{sec:nonres}, 
in Section~\ref{sec:rest dig},  
we outline a substantial  improvement of one of the results of
Dartyge, Mauduit and S{\'a}rk{\"o}zy~\cite{DMS}. Namely, 
given a basis $\omega_1, \ldots, \omega_n$ of $\F_{p^n}$ 
over $\F_p$, and  a collection of $n$ sets 
$\fA = \{\cA_i\subseteq \F_p~:~i=1, \ldots, n\}$,
we consider the set 
\begin{equation}
\label{eq:Set WA}
\cW_\fA = \left\{a_1 \omega_1+\ldots+a_n \omega_n~:~ 
a_i \in \cA_i, \ i=1, \ldots, n\right\},
\end{equation}
which has a natural interpretation of elements in $\F_{p^n}$ 
``with restricted digits''. 
Dartyge,  Mauduit  and  S{\'a}rk{\"o}zy~\cite[Theorem~2.1]{DMS}
show that if for some fixed $\varepsilon > 0$ the lower bound
\begin{equation}
\label{eq:DMS Cond}
\min_{1 \le i \le r}\# \cA_i  \ge \(\frac{ \sqrt{5}-1}{2} + \varepsilon\)p
\end{equation}
holds, then, as $p\to \infty$,
the set $\cW_\fA$ contains asymptotically equal proportions 
of quadratic residues and non-residues  (note that in~\cite{DMS}
only the case of $\cA_1 =\ldots =\cA_r$ is considered but the 
proof immediately extends to different sets).

Here, in Section~\ref{sec:rest dig} we prove a similar asymptotic equidistribution 
of quadratic residues and non-residues under a much more
relaxed condition than~\eqref{eq:DMS Cond}. Namely, for our result 
we only assume that for some fixed $\varepsilon > 0$ we have 
$$
\prod_{1 \le i \le n}\# \cA_i  \ge  p^{(1/2 + \varepsilon)n^2/(n-1)}
\mand
\min_{1 \le i \le n}\# \cA_i  \ge  p^\varepsilon.
$$
For $n \geq 3$ this is a much wider range of parameters than the
earlier restriction~\eqref{eq:DMS Cond} that is linear in $p$.

\subsection{Notation} 
Throughout the paper the implied constants in the symbols ``$O$'' and
``$\ll$'' may depend on the real parameter $\varepsilon > 0$ and
 an integer parameter $\nu \ge 1$.  We recall that
the expressions $A \ll B$ and $A=O(B)$ are each equivalent to the
statement that $|A|\le cB$ for some constant $c$.  

As usual,
$\log z$ denotes the natural logarithm of $z$.

The letter $p$ always denotes a prime.

%% We also use $\F_p$ to denote the finite field of $p$ elements.
As we have mentioned, we use $\F_r$ to denote the finite field of $r$ elements.

\section{Preparations}
\subsection{Bounds of some exponential and character sums}
\label{sec:exp char}

We need the following result of Bourgain~\cite[Lemma~4]{Bour2}.

\begin{lemma}
\label{lem:ExpSums} Let $n > k \ge 1$ and
$\vec{d} \in \cD_{k,n}^*$. Then for any 
integers $a$ and $q$ with $\gcd(2a,q) = 1$ and 
 $3 \le q \le n^{1/10  \kappa}$, where $\kappa = k/n$,  we have
 $$
\left|\sum_{s \in \cN_n(\vec{d})} \exp(2 \pi i as/q)\right|
 < \#\cN_n(\vec{d}) 2^{-\sqrt{n}}.
$$
\end{lemma}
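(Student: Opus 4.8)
The plan is to exploit the product structure of the sum over $\cN_n(\vec d)$. Let $\cI = \{i : \delta_i = \free\} \subseteq \{0,\dots,n-1\}$, so that $\#\cI = n-k$, and let $c = \sum_{i\notin\cI}\delta_i 2^i$ be the contribution of the prescribed digits. Every $s\in\cN_n(\vec d)$ has the form $s = c + \sum_{i\in\cI}d_i 2^i$ with the $d_i\in\{0,1\}$ ranging freely, so the sum factorises,
$$
\sum_{s\in\cN_n(\vec d)}\exp(2\pi i as/q) = \exp(2\pi i ac/q)\prod_{i\in\cI}\left(1+\exp(2\pi i a 2^i/q)\right).
$$
Writing $\|x\|$ for the distance from $x$ to the nearest integer and using $|1+\exp(2\pi i\theta)| = 2\cos(\pi\|\theta\|)$, this gives
$$
\left|\sum_{s\in\cN_n(\vec d)}\exp(2\pi i as/q)\right| = \#\cN_n(\vec d)\prod_{i\in\cI}\cos(\pi\|a 2^i/q\|),
$$
so it suffices to prove that $\prod_{i\in\cI}\cos(\pi\|a 2^i/q\|) < 2^{-\sqrt n}$.

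The first, easy, observation is that $\gcd(2a,q)=1$ forces $q$ to be odd and $a2^i\not\equiv 0\pmod q$, so $\|a 2^i/q\|\ge 1/q$ for every $i$; hence each factor is at most $\cos(\pi/q)\le 1-c_0/q^2$ for an absolute $c_0>0$, and
$$
\prod_{i\in\cI}\cos(\pi\|a 2^i/q\|)\le \cos(\pi/q)^{\,n-k}\le \exp\!\left(-\frac{c_0(n-k)}{q^2}\right).
$$
This is already smaller than $2^{-\sqrt n}$ as soon as $q^2$ is a little below $(n-k)/\sqrt n$, which — thanks to the upper bound on $q$ in terms of $\kappa=k/n$ — is the case throughout the admissible range.

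Where the crude bound loses too much is when $q$ is comparatively large, and there I would bring in the dynamics of the doubling map. If $\|a 2^i/q\|\le \tfrac14$ then $\|a 2^{i+1}/q\| = 2\|a 2^i/q\|$, so on any block of consecutive positions on which $\|a 2^j/q\|$ stays $\le\tfrac14$ this quantity doubles at each step; since it never drops below $1/q$, such a block has at most $\log_2 q + O(1)$ positions. Cutting $\{0,\dots,n-1\}$ at the positions $i$ with $\|a 2^i/q\|>\tfrac14$ therefore leaves $\gg n/\log q$ of these ``large'' positions, each contributing a factor at most $\cos(\pi/4)=1/\sqrt2$; keeping only the free ones among them yields $\prod_{i\in\cI}\cos(\pi\|a 2^i/q\|)\le 2^{-m/2}$ with $m\gg n/\log q - k$, which again beats $2^{-\sqrt n}$ in the admissible range.

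I expect the real work to be in the last step: an adversary would place the $k$ prescribed digits precisely at the positions where $\|a 2^i/q\|$ is largest, that is, where the factors $\cos(\pi\|a 2^i/q\|)$ are smallest and hence most useful to us. What saves the argument is that such positions are sparse: the residues $a 2^i\bmod q$ cycle, with period $\mathrm{ord}_q 2\ge\log_2 q$, through the coset $a\langle 2\rangle$ of the group generated by $2$ modulo $q$, so a large value of $\|a 2^i/q\|$ can occur only $O(n/\log q)$ times for $i<n$. Balancing this sparsity against the number $k$ of deleted positions is exactly the point at which the hypothesis linking the size of $q$ to $\kappa$ enters, and getting that bookkeeping right — so that after the deletion enough large-value factors remain to force the product below $2^{-\sqrt n}$ — is the crux of the proof.
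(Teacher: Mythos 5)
First, a point of comparison: the paper does not prove this lemma at all --- it is quoted from Bourgain [Bour2, Lemma~4] --- so there is no in-paper argument to match yours against. Your opening reduction is nonetheless the natural one and is correct: the sum factors over the free positions $\cI$, and the task becomes to show $\prod_{i\in\cI}\cos(\pi\|a2^i/q\|)<2^{-\sqrt n}$ for \emph{every} set $\cI$ of $n-k$ free positions, equivalently to bound $\sum_{i\in\cI}\|a2^i/q\|^2$ from below by a constant times $\sqrt n$.

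The gap is that neither of your two quantitative arguments covers the stated range $q\le n^{1/10\kappa}$, and both of your claims that they do are false. The crude bound $\cos(\pi/q)^{n-k}\le\exp(-c_0(n-k)/q^2)$ beats $2^{-\sqrt n}$ only when $q^2\ll(1-\kappa)\sqrt n$, i.e.\ essentially $q\le n^{1/4}$; but for $\kappa<2/5$ the hypothesis allows $q$ up to $n^{1/10\kappa}>n^{1/4}$, and for small $\kappa$ up to arbitrarily large powers of $n$. The quarter-threshold argument guarantees only $\gg n/\log_2q$ positions with $\|a2^i/q\|>1/4$, so your $m\gg n/\log_2 q-k$ is positive only when $\log_2 q\ll 1/\kappa$, i.e.\ $q\le 2^{O(1/\kappa)}$ --- a bound independent of $n$ and far below $n^{1/10\kappa}$ once $n$ is large. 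That sparse density $1/\log_2 q$ is genuinely attained (take $q=2^r-1$, $a=1$: only $i\equiv r-1,r-2\pmod r$ give $\|2^i/q\|>1/4$), so as soon as $k\ge 2n/r$ the adversary prescribes all such positions and counting only quarter-size values yields nothing. Your closing heuristic is also backwards: $O(n/\log q)$ is the guaranteed \emph{lower} bound on the number of large positions, not an upper bound on how often the adversary is helped, and the danger is that this count is too small relative to $k$, not too large. What is actually needed is a multi-scale version of your doubling observation: around each $j$ with $\|a2^j/q\|>1/4$ one has $\|a2^{j-m}/q\|\ge 2^{-m}\|a2^j/q\|>2^{-m-2}$, so even after the adversary deletes the $\approx\kappa\log_2q$ most favourable positions in each of the $\approx n/\log_2q$ such windows (Jensen handles an uneven distribution of deletions), each window still contributes $\gg 4^{-\kappa\log_2 q}=q^{-2\kappa}$ to $\sum_{i\in\cI}\|a2^i/q\|^2$, for a total $\gg nq^{-2\kappa}/\log_2 q$, which exceeds $\sqrt n$ precisely for $q\le n^{(1/2-o(1))/2\kappa}$ --- this is where the exponent $1/10\kappa$ comes from. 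That is exactly the step you defer as ``the crux'', so as written the proof is incomplete.
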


We need the following bound of a double character sum
due to Karatsuba~\cite{Kar1}, see also~\cite[Chapter~VIII, Problem~9]{Kar2}, 
which in turn follows from the
Weil bound (see~\cite[Corollary~11.24]{IwKow}) and the H{\"o}lder inequality.

We present it in the settings of arbitrary finite fields.

\begin{lemma}
\label{lem:DoubleSums} 
For any integer $\nu \ge 1$, any sets
$\cA, \cB \subseteq \F_r$ and any non-trivial multiplicative character  $\chi$ 
of $\F_r$,  
we have
$$
\sum_{a\in \cA}\sum_{b \in \cB} \chi(a+b)  \ll  (\# \cA)^{1-1/2\nu} \#\cB r^{1/4\nu} +  
(\# \cA)^{1-1/2\nu} (\#\cB)^{1/2} r^{1/2\nu} ,
$$
where the implied constant depends only on $\nu$. 
\end{lemma}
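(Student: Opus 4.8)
The plan is to establish the bound by a standard $2\nu$-th moment computation: apply H\"older's inequality in the outer variable $a$ to extract the factor $(\#\cA)^{1-1/2\nu}$, then estimate the resulting moment of the inner character sum over $b$ by expanding and invoking the Weil bound tuple by tuple. Set $S=\sum_{a\in\cA}\sum_{b\in\cB}\chi(a+b)$. First I would bound $|S|\le\sum_{a\in\cA}\bigl|\sum_{b\in\cB}\chi(a+b)\bigr|$ and apply H\"older to the sum over $a$ with exponents $2\nu$ and $2\nu/(2\nu-1)$, which produces the factor $(\#\cA)^{1-1/2\nu}$ multiplied by the $2\nu$-th root of $\sum_{a\in\cA}\bigl|\sum_{b\in\cB}\chi(a+b)\bigr|^{2\nu}$. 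Since each summand is nonnegative, I may enlarge the range of $a$ from $\cA$ to all of $\F_r$, so it suffices to prove
$$
\sum_{a\in\F_r}\Bigl|\sum_{b\in\cB}\chi(a+b)\Bigr|^{2\nu}\ll_\nu (\#\cB)^\nu r+(\#\cB)^{2\nu}r^{1/2}.
$$
Taking the $2\nu$-th root, using the subadditivity $(X+Y)^{1/2\nu}\le X^{1/2\nu}+Y^{1/2\nu}$, and multiplying back by $(\#\cA)^{1-1/2\nu}$ then yields exactly the two claimed terms $(\#\cA)^{1-1/2\nu}(\#\cB)^{1/2}r^{1/2\nu}$ and $(\#\cA)^{1-1/2\nu}\#\cB\,r^{1/4\nu}$.

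To prove the displayed moment bound, I would expand the $2\nu$-th power and interchange summation, rewriting the left-hand side as a sum over tuples $(b_1,\ldots,b_\nu,c_1,\ldots,c_\nu)\in\cB^{2\nu}$ of
$$
\sum_{a\in\F_r}\chi\Bigl(\frac{\prod_{i=1}^\nu (a+b_i)}{\prod_{j=1}^\nu (a+c_j)}\Bigr),
$$
where the values of $a$ annihilating a factor are discarded at a cost of $O(\nu)$ per tuple (recalling the convention $\chi(0)=0$). The Weil bound for multiplicative character sums of rational functions gives $O_\nu(r^{1/2})$ for this inner sum, \emph{unless} the rational function $\prod_i(X+b_i)/\prod_j(X+c_j)$ is a constant times a $d$-th power, where $d=\ord\chi\ge 2$ since $\chi$ is nontrivial; such degenerate tuples are bounded trivially by $r$ each.

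The key combinatorial point, which I expect to be the main obstacle, is to show that the number of degenerate tuples is $O_\nu((\#\cB)^\nu)$. Writing $m_+(v)$ and $m_-(v)$ for the multiplicities of a value $v$ among the $b_i$ and the $c_j$, degeneracy forces $m_+(v)\equiv m_-(v)\pmod d$ for every $v$. Because $d\ge 2$, any value occurring in the tuple contributes at least $2$ to $\sum_v\bigl(m_+(v)+m_-(v)\bigr)=2\nu$: if it lies in both supports this is clear, while if it lies in only one support its multiplicity there is divisible by $d$, hence at least $d\ge 2$. Consequently at most $\nu$ distinct values occur. Choosing these values from $\cB$ in $O_\nu((\#\cB)^\nu)$ ways and assigning one of them to each of the $2\nu$ coordinates in $O_\nu(1)$ ways bounds the number of degenerate tuples by $O_\nu((\#\cB)^\nu)$, so they contribute $\ll_\nu (\#\cB)^\nu r$. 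The remaining at most $(\#\cB)^{2\nu}$ tuples each contribute $O_\nu(r^{1/2})$ by the Weil bound, giving the moment estimate and completing the proof. The only subtleties are the careful application of the Weil bound to rational functions (handling zeros and poles) and the multiplicity-modulo-$d$ argument just described; everything else is routine H\"older bookkeeping.
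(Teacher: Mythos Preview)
Your proposal is correct and is exactly the approach the paper indicates: the paper does not spell out a proof but attributes the lemma to Karatsuba and remarks that it ``follows from the Weil bound \ldots\ and the H\"older inequality,'' which is precisely the $2\nu$-th moment argument you have written out in full.
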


In particular, taking $\nu =\rf{\varepsilon^{-1}}$ for a fixed  $\varepsilon>0$
we derive from Lemma~\ref{lem:DoubleSums} 

\begin{cor}
\label{cor:DoubleSums}
For any $\eta>0$ there exists $\delta >0$ 
such that for any  sets
$\cA, \cB \subseteq \F_r$ with $\# \cA \ge r^{1/2 + \eta}$ 
and $\# \cB \ge r^{\eta}$ 
and any non-trivial multiplicative character $\chi$ of $\F_r$,  
we have
$$
\sum_{a\in \cA}\sum_{b \in \cB} \chi(a+b)  \ll \# \cA  \#\cB r^{-\delta},
$$
where the implied constant depends only on $\eta$. 
\end{cor}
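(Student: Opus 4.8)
The plan is to feed Lemma~\ref{lem:DoubleSums} a parameter $\nu = \nu(\eta)$ that is large enough in terms of $\eta$, and then to check that each of the two terms in the resulting bound beats the trivial size $\#\cA\,\#\cB$ by a fixed power of $r$.

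First I fix $\eta > 0$ and set $\nu = \rf{\eta^{-1}}$ (any sufficiently large integer depending only on $\eta$ would do equally well). Lemma~\ref{lem:DoubleSums} then gives
$$
\sum_{a\in\cA}\sum_{b\in\cB}\chi(a+b)
\ll (\#\cA)^{1-1/2\nu}\,\#\cB\, r^{1/4\nu}
 + (\#\cA)^{1-1/2\nu}\,(\#\cB)^{1/2}\, r^{1/2\nu},
$$
with implied constant depending only on $\nu$, hence only on $\eta$. Next I factor $\#\cA\,\#\cB$ out of each term. Using $\#\cA \ge r^{1/2+\eta}$, the first term is
$$
\#\cA\,\#\cB\,(\#\cA)^{-1/2\nu} r^{1/4\nu}
\le \#\cA\,\#\cB\, r^{-\eta/2\nu},
$$
while using both $\#\cA \ge r^{1/2+\eta}$ and $\#\cB \ge r^{\eta}$, the second term is
$$
\#\cA\,\#\cB\,(\#\cA)^{-1/2\nu}(\#\cB)^{-1/2} r^{1/2\nu}
\le \#\cA\,\#\cB\, r^{(1-2\eta)/4\nu - \eta/2}.
$$

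It then remains to observe that the choice $\nu \ge \eta^{-1}$ forces both exponents of $r$ to be strictly negative: $-\eta/(2\nu) < 0$ trivially, and $(1-2\eta)/(4\nu) - \eta/2 < 0$ because $(1-2\eta)/(4\nu) \le \eta/4 < \eta/2$ when $0 < \eta < 1/2$, the expression being plainly negative when $\eta \ge 1/2$. Choosing any $\delta = \delta(\eta) > 0$ smaller than both $\eta/(2\nu)$ and $\eta/2 - (1-2\eta)/(4\nu)$, each term is $\ll \#\cA\,\#\cB\, r^{-\delta}$, hence so is their sum, with implied constant depending only on $\eta$. There is no real obstacle here; the only point deserving a moment's attention is that one must genuinely use the hypothesis $\#\cB \ge r^{\eta}$ (not merely $\#\cB \ge 1$) to make the second term decay, and that $\nu$, $\delta$ and the implied constant must all be tracked as functions of $\eta$ alone.
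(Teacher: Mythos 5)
Your proposal is correct and is essentially the paper's own argument: the paper derives the corollary from Lemma~\ref{lem:DoubleSums} simply by taking $\nu=\rf{\varepsilon^{-1}}$ and leaves the exponent bookkeeping implicit, which is exactly the computation you carry out. Your verification of the two exponents (including the observation that the hypothesis $\#\cB\ge r^{\eta}$ is genuinely needed for the second term) is accurate.
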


We make use of the following special case of 
a result of Shao~\cite{Shao}.

\begin{lemma}
\label{lem:MomentCharSums} Let $\nu\ge 1$ be a fixed integer.  
Let $0 \le w_1 < \ldots  < w_J< p$ 
be $J\ge 1$ arbitrary integers with
$$
w_{j+1} - w_j \ge H, \qquad j = 1, \ldots, J-1,
$$
for some real $H\ge p^{1/2\nu}$.  Then for any non-principal
multiplicative  character $\chi$ modulo $p$, we have 
$$
\sum_{j=1}^{J-1} \max_{h \le H} \left|\sum_{i=1}^{h} \chi\(i+w_j\)\right|^{2\nu} 
\ll p^{1/2 + 1/2\nu + o(1)}H^{2\nu - 2}.
$$
\end{lemma}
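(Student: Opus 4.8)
The estimate is a special case of a result of Shao~\cite{Shao}; we indicate the strategy one would follow to prove it directly. The plan is to combine the Burgess amplification method with the Weil bound, in the shape of a moment estimate for complete character sums, exploiting the separation of the $w_j$ throughout.

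First I would remove the maximum over $h$. Writing the prefix $\{1,\dots,h\}$ as a disjoint union of at most $\log_2 H$ dyadic blocks and applying the triangle inequality together with H\"older's inequality, one reduces, at the cost of a factor $(\log H)^{O(1)}=p^{o(1)}$, to bounding, for each dyadic $L\le H$, the sum
$$
\sum_{I}\left|\sum_{n\in I}\chi(n)\right|^{2\nu}
$$
over a family of intervals $I$ of common length $L$: for each $j$ a collection of $\lceil H/L\rceil$ consecutive such blocks tiles $(w_j,w_j+H]$. Since the $w_j$ are $H$-separated and $w_J<p$, all of these intervals are pairwise disjoint and contained in $[1,p+H]$, and their number $R$ satisfies $R\ll p/L$. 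For small $L$ the trivial bound $\ll RL^{2\nu}\ll pL^{2\nu-1}$ already suffices, using $H\ge p^{1/2\nu}$, so only the range of $L$ close to $H$ is at issue.

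For such $L$ I would run the Burgess argument on all the intervals $I$ simultaneously. For parameters $A,B\ge1$ with $AB\le L/2$, translating $I$ by $ab$ with $a\le A$ and $b\le B$ gives
$$
\sum_{n\in I}\chi(n)=\frac1{AB}\sum_{a\le A}\chi(a)\sum_{n\in I}\sum_{b\le B}\chi(\bar a n+b)+O(AB),
$$
and, collecting terms according to the value $v\equiv\bar a n\pmod p$, with $N_I(v)=\#\{(a,n):a\le A,\ n\in I,\ \bar a n\equiv v\pmod p\}$ and $T(v)=\sum_{b\le B}\chi(v+b)$, one is reduced to estimating
$$
\sum_I\left(\sum_v N_I(v)\,|T(v)|\right)^{2\nu}
=\sum_{v_1,\dots,v_{2\nu}}\left(\prod_{l=1}^{2\nu}|T(v_l)|\right)\sum_I\prod_{l=1}^{2\nu}N_I(v_l).
$$
The factor carrying $\chi$ is controlled by the Weil bound through the complete moment $\sum_{v\bmod p}|T(v)|^{2r}\ll B^rp+B^{2r}p^{1/2}$; the combinatorial weight $\sum_I\prod_l N_I(v_l)$ counts tuples $\bigl((a_l,n_l)\bigr)_{l=1}^{2\nu}$ with each $a_l\le A$ and all the $n_l$ lying in a \emph{single} interval $I$, and it is here that the disjointness of the $I$'s and the bound $R\ll p/L$ enter. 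Balancing these two inputs by choosing $A$, $B$ (of size about $p^{1/2r}$) and $r$ optimally yields the exponent $1/2+1/2\nu$; the hypothesis $H\ge p^{1/2\nu}$ is precisely what is needed to absorb the error term $R\,(AB)^{2\nu}$.

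The step I expect to be the main obstacle is the estimate for the multilinear representation count $\sum_I\prod_l N_I(v_l)$: one must genuinely exploit the separation of the $w_j$ (equivalently, the disjointness of the intervals $I$) rather than bounding each interval in isolation and multiplying by $R$, since the latter approach is too weak by a factor of size roughly $p^{1/2}$. Carrying out this count, together with the attendant optimisation of $A$, $B$, $r$, is the technical heart of~\cite{Shao}.
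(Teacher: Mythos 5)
The paper offers no proof of this lemma at all---it is imported verbatim as a special case of Shao's theorem on character sums over unions of intervals---so your citation of \cite{Shao} already matches the paper's treatment exactly, and your sketch of the underlying argument (Burgess amplification run simultaneously over the disjoint intervals, the Weil bound entering through the complete moment $\sum_{v}|T(v)|^{2r}$, and a multilinear count of representations that must exploit the disjointness) is a fair description of how Shao's proof actually goes. One small imprecision: after your dyadic reduction the trivial bound $RL^{2\nu}\ll pL^{2\nu-1}$ beats the target $p^{1/2+1/2\nu}H^{2\nu-2}$ only for $L$ up to a rather small power of $p$ (not for all $L\le p^{1/2\nu}$, as one checks at $H=p^{1/2\nu}$, where it loses a factor $p^{1/2}$), so the Burgess machinery is needed on essentially the whole range of scales; alternatively one can avoid the dyadic step altogether by fixing, for each $j$, the maximising $h_j$ and noting that the intervals $(w_j,w_j+h_j]$ are already pairwise disjoint of length at most $H$, which is precisely the hypothesis of Shao's result.
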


\subsection{Bounds of the number of solutions of some 
congruences}
\label{sec:cong}

For $\vec{d} \in \cD_n$ and an integer $q \ge 2$ 
we consider 
the set 
$$
\cN_n(\vec{d}, q) = \left\{s \in \cN_n(\vec{d})~:~ 
 s \equiv 0 \pmod q\right\}. 
$$

\begin{lemma}
\label{lem:Cong Small q} Let $n > k \ge 1$ and
$\vec{d} \in \cD_{k,n}^*$. Then for any 
odd $q$ with  
 $1 < q \le n^{1/10  \kappa}$, where $\kappa = k/n$,  we have
 $$
\# \cN_n(\vec{d}, q) = \frac{1}{q} \#\cN_n(\vec{d}) + 
O\( \#\cN_n(\vec{d}) 2^{-\sqrt{n}}\).
$$
\end{lemma}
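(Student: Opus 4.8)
The plan is to detect the divisibility condition $s\equiv 0\pmod q$ by the orthogonality of additive characters modulo $q$. Writing
$$
\#\cN_n(\vec{d},q) = \sum_{s\in\cN_n(\vec{d})}\frac1q\sum_{a=0}^{q-1}\exp(2\pi i as/q) = \frac1q\sum_{a=0}^{q-1}\ \sum_{s\in\cN_n(\vec{d})}\exp(2\pi i as/q),
$$
the term $a=0$ contributes exactly $\frac1q\#\cN_n(\vec{d})$, which is the main term. It then remains to show that each of the remaining $q-1$ inner sums is $O\(\#\cN_n(\vec{d})2^{-\sqrt n}\)$ with an absolute implied constant; summing these and using $q-1<q$ yields the stated error.

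To bound the inner sum for a fixed $a$ with $1\le a\le q-1$, I would reduce to the setting of Lemma~\ref{lem:ExpSums}. Set $g=\gcd(a,q)$ and write $a=ga_1$, $q=gq_1$ with $\gcd(a_1,q_1)=1$; then $\exp(2\pi i as/q)=\exp(2\pi i a_1 s/q_1)$, so the inner sum equals $\sum_{s\in\cN_n(\vec{d})}\exp(2\pi i a_1 s/q_1)$. Since $q$ is odd, so is $q_1$, whence $\gcd(2a_1,q_1)=1$; since $a\not\equiv 0\pmod q$ we have $q_1>1$, hence $q_1\ge 3$; and $q_1\le q\le n^{1/10\kappa}$. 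Thus Lemma~\ref{lem:ExpSums}, applied with $a_1,q_1$ in place of $a,q$, gives
$$
\left|\sum_{s\in\cN_n(\vec{d})}\exp(2\pi i a_1 s/q_1)\right| < \#\cN_n(\vec{d})\,2^{-\sqrt n}.
$$

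Combining these bounds over $1\le a\le q-1$ gives
$$
\left|\#\cN_n(\vec{d},q)-\frac1q\#\cN_n(\vec{d})\right| \le \frac{q-1}{q}\,\#\cN_n(\vec{d})\,2^{-\sqrt n} < \#\cN_n(\vec{d})\,2^{-\sqrt n},
$$
which is the assertion. This is essentially a routine deduction from Lemma~\ref{lem:ExpSums}; the only point requiring care — and thus the main ``obstacle'' — is the passage from a general $a$ to the coprime pair $(a_1,q_1)$: one has to check that the reduced modulus $q_1$ still lies in the admissible range $3\le q_1\le n^{1/10\kappa}$ and is coprime to $2$, which is exactly where the hypothesis that $q$ is odd is used. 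No lattice-geometric or additive-combinatorial input is needed here, all the genuine difficulty being already encapsulated in Bourgain's bound.
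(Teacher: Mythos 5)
Your proposal is correct and follows exactly the paper's argument: orthogonality of additive characters, isolating the $a=0$ main term, and reducing each nonzero $a$ to the coprime pair $(a_1,q_1)$ so that Lemma~\ref{lem:ExpSums} applies (the paper states this reduction in one line, while you spell it out). No differences of substance.
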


\begin{proof} Using the orthogonality of exponential functions
we write
$$
\# \cN_n(\vec{d}, q) = \sum_{s \in \cN_n(\vec{d}) }
\frac{1}{q} \sum_{a =0}^{q-1} \exp(2 \pi i as/q)
= \frac{1}{q} \sum_{a =0}^{q-1}  \sum_{s \in \cN_n(\vec{d}) }
\exp(2 \pi i as/q).
$$
The term corresponding to $a = 0$ is equal to $ \#\cN_n(\vec{d})/q$
while it is easy to see that Lemma~\ref{lem:ExpSums}
also applies to exponential sums with denominators   $q/\gcd(a,q)\ge 3$
instead of $q$.
\end{proof}  
    
For larger values of $q$ we only have an upper bound 
on $\# \cN_n(\vec{d}, q)$. 

For real positive $\kappa$ and $\varrho$ we define 
\begin{equation}
\label{eq:tau}
\tau(\kappa, \varrho) = \frac{1+\varrho - \sqrt{(1-\varrho)^2+4\varrho\kappa}}{2}
\end{equation}
as the root of the equation 
$$
\tau^2 - \tau(1+\varrho) +\varrho(1-\kappa)  = 0
$$
which belongs to the interval $[0, \varrho]$. 
We now set 
\begin{equation}
\label{eq:theta}
\vartheta(\kappa, \varrho)  
=\frac{\tau(\kappa, \varrho)}{\varrho}.
\end{equation}

\begin{lemma}
\label{lem:Cong Med q} Let $\varepsilon > 0$ be 
fixed. Let 
$$n(1-\varepsilon) > k \ge 1 \mand 
2^{n(1-\varepsilon)} \ge q \ge 1.
$$ 
 Then for any $\vec{d} \in \cD_{k,n}^*$  
we have
$$
\# \cN_n(\vec{d}, q) \ll   \#\cN_n(\vec{d}) 
q^{-\vartheta(\kappa, \varrho)}, 
$$
where the implied constant is absolute, 
$\kappa$ and $\varrho$ are defined by
$$
\kappa = k/n \mand q = 2^{\varrho n},
$$
and $\vartheta(\kappa, \varrho)$ is given by~\eqref{eq:theta}.
\end{lemma}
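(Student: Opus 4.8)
The plan is to turn the statement into a counting problem for a linear congruence and then to exploit the fact that, inside any window of consecutive binary digit positions of length slightly above $\log_2 q$, the contribution to $s$ is essentially injective modulo $q$.

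First I would deal with the trivial cases. Since $\vec d\in\cD_{k,n}^*$ forces $\delta_0=1$, every $s\in\cN_n(\vec d)$ is odd, so if $q$ is even then $\cN_n(\vec d,q)=\emptyset$ and the bound is vacuous; the case $q=1$ is immediate. So assume $q\ge 3$ is odd. Writing $F$ for the set of the $n-k$ free digit positions, we have $\#\cN_n(\vec d)=2^{n-k}$ and
$$
\#\cN_n(\vec d,q)=\#\Big\{\vec\varepsilon\in\{0,1\}^F~:~\sum_{j\in F}\varepsilon_j 2^{j}\equiv b\pmod q\Big\}
$$
for a residue $b$ determined by the prescribed digits. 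Unwinding the definitions, with $q=2^{\varrho n}$ and $\vartheta(\kappa,\varrho)=\tau(\kappa,\varrho)/\varrho$, the asserted bound is equivalent to $\#\cN_n(\vec d,q)\ll 2^{(1-\kappa-\tau)n}$.

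The key observation is that if $U$ is an interval of consecutive digit positions with $|U|=L$ and $2^{L}<2q$, then for \emph{any} target residue the partial sum $\sum_{j\in F\cap U}\varepsilon_j 2^{j}$ takes that residue at most $\lceil 2^{L}/q\rceil\le 2$ times: these partial sums form $2^{|F\cap U|}$ pairwise distinct integers which, after dividing out the lowest power of $2$ (legitimate as $q$ is odd), lie in an interval of length $2^{L}<2q$. Consequently, fixing the digits at all free positions outside $U$ pins down the digits inside $U$ up to at most two choices, so
$$
\#\cN_n(\vec d,q)\le 2\cdot 2^{(n-k)-|F\cap U|}.
$$
Everything then reduces to locating an interval $U$ of length $L\approx\log_2 q$ that captures as many free positions as possible. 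An averaging argument over all such windows (allowing them to protrude past the ends of $[0,n)$) yields a window with $|F\cap U|\gtrsim(1-\kappa)L$; trading the length $L$ against the capture density — that is, balancing the loss $\lceil 2^{L}/q\rceil$ against the gain $2^{|F\cap U|}$ — is exactly what forces the root $\tau$ to satisfy $\tau^{2}-\tau(1+\varrho)+\varrho(1-\kappa)=0$ and produces the exponent $\vartheta(\kappa,\varrho)$; the elementary inequality $(1-\kappa)(1-\varrho)\le 1-\kappa-\tau$, i.e.\ $\tau\le(1-\kappa)\varrho$ (equivalently $\kappa\tau\ge 0$), confirms that the resulting exponent is of the claimed size.

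The delicate part — and the main obstacle — is extracting the dense window efficiently enough to keep the implied constant absolute: a crude pigeonhole over a partition of $[0,n)$ into $\lceil n/L\rceil$ blocks loses a factor that is harmless only when $L\mid n$, so one must either iterate, peeling off successive dense windows and recursing on the complementary intervals (at which point the same problem reappears with $n$, $k$ and $q$ suitably rescaled, and optimising this rescaling reproduces $\tau(\kappa,\varrho)$), or recast the count through the geometry of numbers: $\#\cN_n(\vec d,q)$ is bounded by the number of integer points of a coset of the index‑$q$ lattice $\{\vec x\in\Z^{F}:\sum_{j\in F}x_j2^{j}\equiv 0\ (\mathrm{mod}\ q)\}$ inside the cube $[0,1]^{F}$, which one estimates via Minkowski's second theorem in terms of the successive minima, using that the $2^{j}$ ($j\in F$) occupy $n-k$ distinct positions in $[0,n)$ to rule out too many short lattice vectors. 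Once the dense window (or the lattice‑minima bound) is in place, the remaining bookkeeping — tracking the $O(1)$ factors and verifying the condition $q\le 2^{(1-\varepsilon)n}$ keeps the relevant lengths comparable — is routine.
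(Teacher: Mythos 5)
Your reduction coincides with the paper's: a window $U$ of $r$ consecutive bit positions with $2^r<q$ pins down, once all free digits outside $U$ are chosen, the free digits inside $U$ (the paper arranges $2^r<q$ so the choice is unique rather than your factor of $2$, which is immaterial), so everything hinges on exhibiting one window containing many free positions. The gap sits exactly where you flag ``the delicate part'': your assertion that averaging over all length-$L$ windows (protruding ones included) yields a window with $\gtrsim(1-\kappa)L$ free positions is false, and the exponent it would give, $q^{-(1-\kappa)}$, is in general unattainable. Each free position lies in exactly $r$ of the $n+r-1$ sliding windows, so that average is only $r(n-k)/(n+r-1)\approx(1-\kappa)L/(1+\varrho)$, and the lemma's exponent is sharp against clustering: with $n=100$, $L=70$ and the $40$ fixed positions essentially filling $[30,70)$, every window of length $70$ (protruding or not) captures at most $30=\tau(0.4,0.7)\,n$ free positions, well below $(1-\kappa)L=42$. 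So $\vartheta=\tau/\varrho$ is precisely where the window argument saturates, and neither of your proposed rescues (recursing on complementary intervals, or Minkowski's second theorem applied to the lattice $\sum_j x_j2^j\equiv 0 \pmod q$) is actually carried out; you derive the quadratic for $\tau$ only by asserting that ``balancing'' must produce it.

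The paper closes this gap with a short truncated-averaging computation. It keeps the exact identity $\sum_{j=-r+1}^{n-1}w_j=r(n-k)$ over all sliding windows, discards the $2(t-1)$ most protruding windows, whose total contribution is at most $t^2+O(t)$ because $w_j\le r-|j|$ for $j<0$ and $w_{n-j}\le j$ for $j<r$, and averages over the remaining $n+r-2t+1$ windows. Choosing $t=\lfloor\tau(\kappa,\varrho)n\rfloor$ with $\tau$ the root of $\tau^2-\tau(1+\varrho)+\varrho(1-\kappa)=0$ in $[0,\varrho]$ makes
$$
\frac{r(n-k)-t^2+O(t)}{n+r-2t+1}=n\tau(\kappa,\varrho)+O(1)=n\varrho\,\vartheta(\kappa,\varrho)+O(1),
$$
which is exactly the density needed, with an absolute implied constant. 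This is the entire content of the lemma beyond the reduction you already have; without it (or an executed substitute) your proof is incomplete.
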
 

\begin{proof} We refer to the digits of $s \in  \cN_n(\vec{d})$ 
on positions $j$ with $\delta_j=\free$ as to {\it free positions\/}
and we refer to other digits as to {\it fixed positions\/}. 

We set
$$
r = \rf{\frac{\log q}{\log 2}} -1.
$$
Let $\vec{d} = (\delta_0, \ldots, \delta_{n-1})$. 

We  set $\delta_i=0$ for all integers $i \not \in [0, n-1]$. 

Now, for $j  \in \Z$, we denote by 
 $w_j$  the number of  free positions
amongst the positions $j, \ldots,j+r-1$ and let $\chi^{\free}$ 
be the characteristic function of the symbol `$\free$' defined on the set $\{\free, 0, 1\}$.
Then 
$$
\sum_{j=-r+1}^{n-1} w_j = \sum_{j=-r+1}^{n-1}
\sum_{i=0}^{r-1} \chi^{\free}(\delta_{i+j}) = r(n-k).
$$

We now  set $t = \fl{\tau(\kappa, \varrho)  n}$, where
$\tau(\kappa, \varrho)$ is given by~\eqref{eq:tau}.
We  count the total number $W$ of free positions which appear in 
each of the $n+r -2t+1$ blocks of width $r$  
starting at the points $j = -r+t, \ldots, n-t$.  Then we have 
\begin{equation}
\label{eq:W lower}
W = \sum_{j=-r+t}^{n-t} w_j  =  r(n-k) -
\sum_{j=-r+1}^{-r+t-1} w_j - \sum_{j=n-t+1}^{n-1} w_j .
\end{equation}
We now note that for $j < 0$ we have $w_j \le r-|j|$
and for $j <r$ we have $w_{n-j} \le  j$. 
Hence,
$$
\sum_{j=-r+1}^{-r+t-1} w_j +\sum_{j=n-t+1}^{n-1} w_j 
\le 2\sum_{i=1}^{t-1} i = t^2 + O(t).
$$ 
Therefore, we conclude from~\eqref{eq:W lower} that 
$$
W  \ge r(n-k) - t^2 + O(t).
$$

Hence, for some $h \in [-r+t, n-t]$ we have
\begin{equation}
\label{eq:large w_h}
\begin{split}
w_h & \ge \frac{W}{n+r-2t+1}\ge  \frac{r(n-k) - t^2 + O(t)}{n+r-2t+1}\\
& = 
n\frac{\varrho(1-\kappa) - \tau(\kappa, \varrho)^2}{1+\varrho-2\tau(\kappa, \varrho)} + O(1)\\
& 
= n\tau(\kappa, \varrho) + O(1)
= n \varrho \vartheta(\kappa, \varrho) + O(1), 
\end{split}
\end{equation}
where the implied constant is absolute.

Now fixing the  digits on the remaining $n-k - w_h$ free positions
$j \not \in [h,h+r-1]$
of  the numbers
$$
\sum_{i=0}^{n-1} d_i 2^i \in  \cN_n(\vec{d})
$$ 
and recalling that $2^r < q$, 
we see that the number 
$$
s = \sum_{i=h}^{h+r-1} d_i 2^{i-h}
$$
belongs to a prescribed residue class modulo $q$ and since $0 \le s < 2^r < q$,
$s$ is  uniquely defined. 
Hence, using~\eqref{eq:large w_h}, we obtain
 $$
\# \cN_n(\vec{d}, q) \le 2^{n-k -w_h} 
\le \# \cN_n(\vec{d}, q) 2^{-w_h} \ll
 \# \cN_n(\vec{d}, q) q^{-\vartheta(\kappa, \varrho)},
$$
and the result follows. 
\end{proof}

\begin{lemma}
\label{lem:Cong Aver q} 
Let $\varepsilon>0$ be sufficiently small and
\begin{equation}
\label{eq:rd12}
 \frac{1}{2} \ge \varrho \ge \frac{1}{4}.
\end{equation}
Moreover, let
\begin{equation}
\label{kappa37}
  \kappa=\frac{k}{n}<\frac{3}{7}- 2 \varepsilon
\end{equation}
and
$$
  2^{\varrho n} \ll A \ll 2^{\varrho n},
$$
and suppose that
\begin{equation}
\label{eq:cond1}
(3+4\varepsilon)\varrho  \le  2(1-\kappa) 
\end{equation} 
and
\begin{equation}
\label{eq:cond2}
  \varrho(1+5\varepsilon)<4(1-\kappa)-2.
\end{equation}
Then
\begin{equation}
\label{goal}
  \sum_{A<q \le 2A} \# \cN_n(\mathbf{d}, q^2) \ll
  \# \cN_n(\mathbf{d}) A^{-\varepsilon/2}.
\end{equation}
\end{lemma}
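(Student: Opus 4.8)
The plan is to isolate the expected main term $\#\cN_n(\mathbf d)/q^2$ by orthogonality and then to estimate the resulting error by splitting the dyadic range of $q$ according to how well $\cN_n(\mathbf d)$ equidistributes modulo $q^2$. Write $J\subseteq\{0,\dots,n-1\}$ for the set of free positions of $\mathbf d$ and $c$ for the contribution of the fixed positions, so $\#J=n-k$ and $\#\cN_n(\mathbf d)=2^{n-k}$; since all elements of $\cN_n(\mathbf d)$ are odd, only odd $q$ occur. Expanding the congruence $s\equiv0\pmod{q^2}$ with additive characters modulo $q^2$ gives, using $|1+\exp(2\pi i\theta)|=2|\cos\pi\theta|$,
$$
\#\cN_n(\mathbf d,q^2)=\frac{\#\cN_n(\mathbf d)}{q^2}+O\!\left(\frac{\#\cN_n(\mathbf d)}{q^2}\,E(q)\right),\qquad E(q):=\sum_{a=1}^{q^2-1}\ \prod_{j\in J}|\cos(\pi a2^{j}/q^2)|.
$$
The main term contributes $\ll\#\cN_n(\mathbf d)\sum_{q>A}q^{-2}\ll\#\cN_n(\mathbf d)A^{-1}$, comfortably within~\eqref{goal}, so everything reduces to showing $\sum_{A<q\le2A}q^{-2}E(q)\ll A^{-\varepsilon/2}$.

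To bound $E(q)$, group $J$ into its maximal runs of consecutive integers $J_i=\{e_i,\dots,e_i+L_i-1\}$, $i=1,\dots,m$, so $\sum_i L_i=n-k$ and $m\le k+1$; applying the telescoping identity $\prod_{l=0}^{L-1}\cos(2^{l}\phi)=\sin(2^{L}\phi)/(2^{L}\sin\phi)$ to each run and $|\sin\pi x|\ge 2\|x\|$ (distance to the nearest integer) yields
$$
\prod_{j\in J}|\cos(\pi a2^{j}/q^2)|\le 2^{-(n-k)}\prod_{i=1}^{m}\frac{1}{\|a2^{e_i}/q^2\|},\qquad\text{so}\qquad \frac{E(q)}{q^2}\le\frac{1}{\#\cN_n(\mathbf d)\,q^2}\sum_{a=1}^{q^2-1}\prod_{i=1}^{m}\frac{1}{\|a2^{e_i}/q^2\|}.
$$
A structural point peculiar to the present range is that $q>A\ge 2^{\varrho n}$ forces the multiplicative order of $2$ modulo $q$ to exceed $\varrho n$, so the frequencies $2^{e_i}$ cannot all degenerate modulo $q^2$; I call $q$ \emph{exceptional} if nevertheless the lattice $\{\mathbf x\in\Z^{J}:\sum_{j\in J}x_j2^{j}\equiv0\pmod{q^2}\}$ has several abnormally small successive minima — equivalently, $q^2$ divides a short nonzero $\pm1$-combination of powers of $2$. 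Such $q$ all divide one fixed integer built from few factors $2^{b}-1$ ($b\le n$), so there are $A^{o(1)}$ of them, and on this thin set I simply use $\#\cN_n(\mathbf d,q^2)\le\fl{2^n/q^2}+1$; since~\eqref{eq:rd12} gives $2\varrho\ge 1/2>\kappa$, their total contribution is $\ll A^{o(1)}2^{(1-2\varrho)n}\ll\#\cN_n(\mathbf d)A^{-\varepsilon/2}$.

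For the remaining, generic $q$, the geometry of numbers gives $\sum_{a=1}^{q^2-1}\prod_i\|a2^{e_i}/q^2\|^{-1}\ll q^{2(1+\theta)}$ with $\theta$ small (its size governed by how balanced the subleading minima of the lattice are, and by $\varepsilon$), whence $q^{-2}E(q)\ll q^{\theta}/\#\cN_n(\mathbf d)$ and the generic contribution is $\ll\#\cN_n(\mathbf d)^{-1}A^{1+\theta}$, which is $\ll A^{-\varepsilon/2}$ exactly when $\varrho(1+\theta+\varepsilon/2)\le 1-\kappa$. The main obstacle is making this quantitative: one has to show that after deleting the exceptional $q$ one may take $\theta$ as close to $1/2$ as needed, which is where Lemma~\ref{lem:Cong Med q} re-enters, being applied to those $q$ (and those $\mathbf d$) for which the run decomposition of $J$ is least favourable, so that the geometric bound is supplemented by the combinatorial one. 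Balancing the quality of the generic estimate (which forces $\varrho(1+\theta+\varepsilon/2)\le 1-\kappa$, essentially~\eqref{eq:cond1}) against the exceptional/diagonal contribution (essentially~\eqref{eq:cond2}), subject to $1/4\le\varrho\le 1/2$, is what pins down the threshold $\kappa<3/7-2\varepsilon$ and yields~\eqref{goal}.
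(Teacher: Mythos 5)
There is a genuine gap at the heart of your argument, and you essentially acknowledge it yourself. After the orthogonality step and the telescoping bound over runs, everything hinges on the claim that for all but an exceptional set of $q\in(A,2A]$ one has $\sum_{a=1}^{q^2-1}\prod_{i=1}^{m}\|a2^{e_i}/q^2\|^{-1}\ll q^{2(1+\theta)}$ with $\theta$ close to $1/2$. This is asserted, not proved, and it is not a routine geometry-of-numbers fact: the number of runs $m$ can be as large as $k+1\asymp\kappa n$, i.e.\ linear in $n$, so even in the most favourable situation each of the $q^2-1$ values of $a$ contributes a product of $m$ factors whose typical size already produces losses of order $(\log q)^{m}=e^{\kappa n\log n(1+o(1))}$, which is superpolynomial in $q$; one must instead keep the truncation $\min\bigl(1,(2^{L_i}\|a2^{e_i}/q^2\|)^{-1}\bigr)$ coming from the telescoping identity and run a genuinely multi-dimensional counting argument, none of which appears in your sketch. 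Your treatment of the exceptional moduli has the same problem in miniature: the $q$ for which the lattice $\{\mathbf x:\sum x_j2^{j}\equiv0\pmod{q^2}\}$ has abnormally small minima are divisors of integers $\sum x_j2^{j}$ with \emph{general} small integer coefficients, not $\pm1$-combinations, and the number of such integers is $2^{O(\mu)}$ (a power of $A$), not $O(1)$; the divisor bound gives $2^{o(n)}$ divisors \emph{per combination}, so the exceptional set has size $2^{2\mu+o(n)}$ and must be beaten by a saving that depends on $\mu$ — your claim that there are only $A^{o(1)}$ exceptional $q$ is unjustified. Finally, the hypotheses \eqref{eq:cond1} and \eqref{eq:cond2} are pattern-matched to the exponents you would like to have rather than derived from any estimate, and the sentence deferring ``the main obstacle'' to an unspecified re-use of Lemma~\ref{lem:Cong Med q} leaves exactly the step that constitutes the proof.

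For comparison, the paper avoids exponential sums entirely here. It splits the $n$ positions into three consecutive blocks $W_1,W_2,W_3$ with $\#(W_1\cup W_2),\#(W_2\cup W_3)=2\varrho n+O(1)$, observes that a congruence modulo $q^2\gg2^{2\varrho n}$ pins down all free digits in either contiguous window once the remaining digits are fixed (so one may assume the free digits concentrate in $W_1\cup W_3$, giving \eqref{eq:beta large} and hence, via \eqref{eq:cond1}, the saving \eqref{betanote}), and then handles the non-contiguous pair $W_1,W_3$ by counting points of the two-dimensional lattice $2^ra_1+c_1\equiv0\pmod{q^2}$ in a box via its successive minima; the moduli with a small first minimum are controlled on average over $q\in(A,2A]$ by the divisor bound applied to $2^ra_1+c_1$, with the count $O(2^{2\mu})$ of short vectors balanced against the saving $2^{-\beta+(1-2\varrho)n-\mu}$, which is where \eqref{eq:cond2} is used. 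If you want to pursue your Fourier-analytic route you would need to supply a complete analogue of that dichotomy; as written, the proposal is an outline rather than a proof.
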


\begin{proof} We follow the definition of free and fixed positions as 
in the proof of Lemma~\ref{lem:Cong Med q}.

Let us divide the set of all $n$ positions into three blocks $W_1$, $W_2$, $W_3$
of consecutive positions 
(from the left to the right) in the following way:
The number of positions in $W_1 \cup W_2$ as well as in $W_2 \cup W_3$
is $2\varrho n+O(1)$. This is certainly possible since we have~\eqref{eq:rd12}
 (more explicitly, $W_1$ and $W_3$ contain 
$n(1-2 \varrho)+O(1)$ positions and $W_2$ contains
$n(4\varrho-1)+O(1)$ positions). 
Let $w_i$ be the number of free positions in block $W_i$, $i=1,2,3$.
Since the total number of free positions is $(1-\kappa)n$, we obtain
\begin{equation}
\label{eq:rd1}
  w_1+w_2+w_3 = (1-\kappa)n.
\end{equation}
Now let $\alpha$ be the number of free positions in $W_1$ and $W_2$
together, that is, $\alpha=w_1+w_2$, and analogously let
$\beta=w_1+w_3$ and $\gamma=w_2+w_3$. Then \eqref{eq:rd1} implies that
\begin{equation}
\label{eq:rd8}
  \alpha+\beta+\gamma = 2(1-\kappa)n.
\end{equation}
Now regarding the neighbouring blocks $W_1$ and $W_2$ as one block
with $\alpha$ free positions, as in the proof of Lemma
\ref{lem:Cong Med q} we obtain
\begin{equation}
\label{eq:N alpha}
\#\cN_n(\mathbf{d}, q^2) \ll \#\cN_n(\mathbf{d}) 2^{-\alpha}
\end{equation}
whenever $A<q \le 2A$. Note that here we  use  the fact that
$A \gg 2^{\varrho n}$, whence $q^2 \gg 2^{2\varrho n}$,
so a congruence modulo $q^2$
fixes all the free positions in the block composed of $W_1$ and $W_2$.
Analogously, we obtain the alternative bound
\begin{equation}
\label{eq:N gamma}
\#\cN_n(\mathbf{d}, q^2) \ll \#\cN_n(\mathbf{d}) 2^{-\gamma},
\end{equation}
using the block composed of $W_2$ and $W_3$. 
Our first observation is that we can assume that 
$\alpha < (1+\varepsilon) \varrho n$, as otherwise~\eqref{eq:N alpha} 
implies 
$$
\#\cN_n(\mathbf{d}, q^2) \ll \#\cN_n(\mathbf{d}) A^{-1-\varepsilon}
$$
and the result follows. 
Similarly, using~\eqref{eq:N gamma}, we can assume that 
 $\gamma < (1+\varepsilon) \varrho n$. Hence, by~\eqref{eq:rd8}
we can also assume that 
\begin{equation}
\label{eq:beta large}
  \beta \ge 2n(1-\kappa-(1+\varepsilon)\varrho).
\end{equation}
Note that by \eqref{eq:cond1}, this implies that
\begin{equation}
\label{betanote}
  2^{-\beta} \ll A^{-1-\varepsilon}.
\end{equation}
Moreover, as trivially $\beta \le (1-\kappa)n$ where
$(1-\kappa)n$ is the total number of free positions, we obtain
$$
  \varrho \ge \frac{1-\kappa}{2(1+\varepsilon)}.
$$
By \eqref{kappa37}, for sufficiently small $\varepsilon>0$ this
implies that
$$
  \varrho>\frac{2}{3}\kappa +\varepsilon,
$$
whence
\begin{equation}
\label{fix}
  \varrho n - \beta + (2-6\varrho)n \le
  n (2\kappa-3\varrho+2\varepsilon \varrho) \le -\varepsilon n
\end{equation}
for sufficiently small $\varepsilon>0$.

Working with $W_1$ and $W_3$
is more difficult, as we are no longer dealing with one, but rather with
two intervals. Writing $r$ for the bit position at the right of $W_1$,
and $s$ for the position at the right of $W_2$,
we are now considering congruences of the form
\begin{equation}
\label{eq:rd2}
  2^ra+2^s b+c \equiv 0 \pmod {q^2}.
\end{equation}
Note that from the construction of $W_1$, $W_2$, $W_3$ it follows that
$$
  r \ge 2\varrho n+O(1).
$$
Once $b$, corresponding to $W_2$, has been fixed, the solution set of
\eqref{eq:rd2} is of the form 
\begin{equation}
\label{eq:ac}
(a,c) = (a_0, c_0) + (a_1, c_1), 
\end{equation}
where $(a_0, c_0) \in \Z^2$ is a 
fixed solution of~\eqref{eq:rd2}
and $(a_1, c_1)$
runs over all solutions of the homogeneous congruence
\begin{equation}
\label{eq:rd3}
  2^ra_1 + c_1 \equiv 0 \pmod {q^2}.
\end{equation}
By construction of the $W_i$, we see that $a$ and $c$ are non-negative integers
with $a, c \ll 2^{(1-2\varrho)n}$, whence also
$|a_1|, |c_1| \ll 2^{(1-2\varrho)n}$.
Moreover, the congruence~\eqref{eq:rd3} describes a two-dimensional lattice
with a basis $\{(1, -2^r), (0, q^2)\}$ and
of determinant $q^2$. Let $2^{\lambda_1(q)}, 2^{\lambda_2(q)}$ be its
successive minima, where $\lambda_1(q) \le \lambda_2(q)$.
For the general background on lattices 
we refer to~\cite{GrLoSch}. 

Then
$$
  q^2 \ll 2^{\lambda_1(q)+\lambda_2(q)} \ll q^2.
$$
Let us first discuss the case that
$$
  \lambda_2(q) \le (1-2\varrho)n.
$$
Then the number of solutions to~\eqref{eq:rd3} with 
$|a_1|, |c_1| \ll 2^{(1-2\varrho)n}$ can be estimated as
$$
O\( \(2^{(1-2\varrho)n-  \lambda_1(q)}+1\) \(2^{(1-2\varrho)n-  \lambda_2(q)}+1\) \)
= O\(2^{2(1-2\varrho)n}q^{-2}\)
$$ 
(note that 
$q^2 \ll 2^{\lambda_1(q) + \lambda_2(q)}
\le 2^{2 \lambda_2(q)} \le 2^{2(1-2\varrho)n}$). 
Furthermore, since  $q^2\ge A^2  \gg 2^{2 \varrho n}$, we obtain the 
bound 
$O\(2^{(2-6\varrho)n}\)$ for the number of solutions to~\eqref{eq:rd3}.

Considering all the possible 
$$
2^{w_2} = 2^{n- k-\beta} = \# \cN_n(\mathbf{d}) 2^{-\beta}
$$
choices for $b$,
%and using $q^2\ge A^2  \gg 2^{2 \varrho n}$,
we therefore obtain
$$
\# \cN_n(\mathbf{d}, q^2) \ll \# \cN_n(\mathbf{d}) 2^{(2-6\varrho)n-\beta}.
$$
By~\eqref{fix}, this contribution,
when summed over $A<q \le 2A$, is negligible with respect 
to~\eqref{goal}. 

We may therefore without loss of generality assume that
$$
  \lambda_2(q)>(1-2\varrho)n.
$$
Again,
from~\eqref{eq:ac} we conclude that  the number of solutions
of~\eqref{eq:rd2} with $|a_1|, |c_1| \ll 2^{(1-2\varrho)n}$ is
$$
O\( \(2^{(1-2\varrho)n-  \lambda_1(q)}+1\) \(2^{(1-2\varrho)n-  \lambda_2(q)}+1\) \) 
= O\(2^{(1-2\varrho)n-\lambda_1(q)} + 1\),
$$ and the number of possible
choices for $b$ is bounded by $\# \cN_n(\mathbf{d}) 2^{-\beta}$,
so
\begin{equation}
\label{eq:N beta}
\# \cN_n(\mathbf{d}, q^2) \ll
\# \cN_n(\mathbf{d}) 2^{-\beta} (2^{(1-2\varrho)n-\lambda_1(q)}+1).
\end{equation}
Let us define a real parameter 
\begin{equation}
\label{eq:lambda}
\lambda = (1-2\varrho)n-2(1-\kappa)n+3(1+\varepsilon)\varrho n. 
\end{equation}
If $\lambda_1(q)>  \lambda$,
then~\eqref{eq:beta large} and~\eqref{betanote} give
$$
  2^{-\beta}\(2^{(1-2\varrho)n-\lambda_1(q)} +1\)
\le   2^{-(1+\varepsilon)\varrho n} + 2^{-\beta} \ll A^{-1-\varepsilon},
$$
so
$$
  \sum_{\substack{A<q \le 2A:\\ \lambda_1(q) \ge \lambda}}
  \#\cN_n(\mathbf{d}, q^2) \ll \# \cN_n(\mathbf{d}) A^{-\varepsilon}.
$$

It now remains to estimate the contribution from $q$ with $ \lambda_1(q) \le \lambda$. 
Furthermore, it is enough to show that for any real positive $\mu< \lambda$ we have 
\begin{equation}
\label{eq:rd13}
  \sum_{\substack{A<q \le 2A:\\ \mu \le \lambda_1(q) < \mu+1}}
  \#\cN_n(\mathbf{d}, q^2) \ll \# \cN_n(\mathbf{d}) A^{-\varepsilon}.
\end{equation}
Now $\lambda_1(q) \le \mu+1$ means that there exist $a_1, c_1 \in \Z$,
not both zero, such that $|a_1|, |c_1| \ll 2^\mu$ 
and~\eqref{eq:rd3} holds true. Note that $2^r a_1 + c_1 = 0$ is impossible,
as it  implies that $2^r \mid c_1$, so $|c_1| \ge 2^r \gg
2^{2\varrho n}$,
contradicting $|c_1| \ll 2^\mu$ as by~\eqref{eq:rd12} and~\eqref{eq:cond1}
we have
$$\mu<\lambda \le  (1-2\varrho)n- \varepsilon\varrho n 
\le 2\varrho n- \varepsilon\varrho n.
$$

Therefore, $2^r a_1 + b_1 \ne 0$,
so by~\eqref{eq:rd3} for each fixed pair $(a_1, c_1)$ there are only
$2^{o(n)}$ possibilities for $q$ that are integer divisors of 
$2^r a_1 + b_1 = O(2^n)$, see~\cite[Theorem~317]{HardyWright}. 
The number of possible
$(a_1, c_1)$ can be bounded by $O(2^{2\mu})$, and
$\# N_n(\mathbf{d}, q^2)$, by~\eqref{eq:N beta}, is at most of order of
magnitude
$$
\#\cN_n(\mathbf{d})(2^{-\beta+(1-2\varrho)n-\mu}+2^{-\beta}) \ll
\#\cN_n(\mathbf{d})(2^{-\beta+(1-2\varrho)n-\mu}+A^{-1-\varepsilon}).
$$
We therefore obtain
\begin{align*}
  \sum_{\substack{A<q \le 2A:\\ \mu \le \lambda_1(q) < \mu+1}}
  \#\cN_n(\mathbf{d}, q^2) & \ll \# \cN_n(\mathbf{d})
  (2^{-\beta+(1-2\varrho)n+\mu+n\varepsilon} + A^{-\varepsilon}) \\
  & \ll \#\cN_n(\mathbf{d})
        (2^{-\beta+(1-2\varrho)n+\lambda+o(n)}+A^{-\varepsilon}).
\end{align*}
Now by
\eqref{eq:cond2}, \eqref{eq:beta large} and~\eqref{eq:lambda}, we have
$$
  -\beta+(1-2\varrho)n+ \lambda < 0,
$$
completing the proof.
\end{proof}

In particular, covering an interval $[A,B]$ by dyadic 
intervals, we derive from Lemma~\ref{lem:Cong Aver q}  the following result suitable for 
our applications. 

\begin{cor}
\label{cor:Cong Aver q} 
Let $\varepsilon>0$ and 
$$
  \kappa=\frac{k}{n}<\frac{3}{7}-2\varepsilon
$$
Suppose that 
$$
\frac{1}{2} \ge \zeta \ge  \xi \ge \frac{1}{4}, 
\qquad (3+4\varepsilon)\zeta \le   2(1-\kappa), \qquad 
  \zeta(1+5\varepsilon) < 4(1-\kappa)-2.
$$
Then for any $A$ and $B$ with 
$$
 2^{\xi n} \ll A \le B  \ll 2^{\zeta n}
$$
we have 
$$
  \sum_{A<q \le B} \# \cN_n(\mathbf{d}, q^2) \ll
  \# \cN_n(\mathbf{d}) A^{-\varepsilon/2} \log B.
$$
\end{cor}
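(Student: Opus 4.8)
The plan is to deduce Corollary~\ref{cor:Cong Aver q} from Lemma~\ref{lem:Cong Aver q} by the standard device of a dyadic subdivision of the range of~$q$. Assuming $B>A$ (otherwise the sum is empty), I would take the integer $J$ with $2^{J-1}A<B\le 2^{J}A$ and write
$$
(A,B]\subseteq\bigcup_{j=0}^{J-1}(A_j,2A_j],\qquad A_j:=2^{j}A,
$$
so that, since $A\ge 1$, the number of blocks satisfies $J\ll\log B$ with an absolute implied constant. It then suffices to bound each of the $O(\log B)$ partial sums $\sum_{A_j<q\le 2A_j}\#\cN_n(\mathbf{d},q^2)$ and add up.

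For a fixed block I would apply Lemma~\ref{lem:Cong Aver q} with its parameter ``$A$'' taken to be $A_j$ and its parameter ``$\varrho$'' taken to be $\varrho_j$, defined by $2^{\varrho_j n}=A_j$; then the hypothesis $2^{\varrho_j n}\ll A_j\ll 2^{\varrho_j n}$ holds with implied constant~$1$. From $2^{\xi n}\ll A\le A_j\le B\ll 2^{\zeta n}$ one gets $\xi\le\varrho_j\le\zeta$ up to an error $O(1/n)$ coming from the implied constants. The remaining hypotheses of the lemma --- $\kappa<3/7-2\varepsilon$, which is assumed, together with $1/2\ge\varrho_j\ge 1/4$, $(3+4\varepsilon)\varrho_j\le 2(1-\kappa)$ and $\varrho_j(1+5\varepsilon)<4(1-\kappa)-2$ --- are all monotone in $\varrho_j$: apart from the lower bound $\varrho_j\ge 1/4$, each becomes harder to satisfy as $\varrho_j$ grows. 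Hence it suffices to check the lower bound from $\varrho_j\ge\xi\ge 1/4$ and the upper-type conditions for $\varrho_j\le\zeta$, at which point they are precisely the hypotheses imposed on $\xi$ and $\zeta$ in the statement. Lemma~\ref{lem:Cong Aver q} then yields $\sum_{A_j<q\le 2A_j}\#\cN_n(\mathbf{d},q^2)\ll\#\cN_n(\mathbf{d})A_j^{-\varepsilon/2}\le\#\cN_n(\mathbf{d})A^{-\varepsilon/2}$, and summing over the $\ll\log B$ values of~$j$ gives the asserted bound. (Since $\sum_{j\ge 0}A_j^{-\varepsilon/2}=A^{-\varepsilon/2}\sum_{j\ge 0}2^{-j\varepsilon/2}\ll_\varepsilon A^{-\varepsilon/2}$, the factor $\log B$ could in fact be removed, but the weaker form stated is all that the applications require.)

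The step I expect to be the main --- though minor --- obstacle is the bookkeeping of the implied constants at the two extreme blocks, where $\varrho_j$ may lie a distance $O(1/n)$ outside $[\xi,\zeta]$, so that the non-strict hypotheses $\varrho\le 1/2$, $\varrho\ge 1/4$ and $(3+4\varepsilon)\varrho\le 2(1-\kappa)$ of Lemma~\ref{lem:Cong Aver q} are met only approximately. At the upper end this is harmless: for $q>2^{n/2}$ a congruence modulo $q^2$ has at most one solution below $2^n$, so $\#\cN_n(\mathbf{d},q^2)\le 1$ there and the total contribution of such~$q$ is $O(2^{n/2})$, which is negligible against $\#\cN_n(\mathbf{d})A^{-\varepsilon/2}$ since $\kappa<3/7-2\varepsilon$ forces $\#\cN_n(\mathbf{d})=2^{n-k}\gg 2^{(4/7)n}$; thus one may assume $B\le 2^{n/2}$. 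At the lower end there are only $O(1)$ offending blocks, on which the needed inequalities still hold once $n$ is large, because the hypotheses on $\xi$ and $\zeta$ enter the proof of Lemma~\ref{lem:Cong Aver q} only through comparisons between exponents that leave a slack of order $\varepsilon n$ (for instance in the passage from~\eqref{eq:cond1} to~\eqref{betanote}), which absorbs any $O(1)$ discrepancy; for the finitely many remaining bounded values of~$n$ the corollary holds trivially with a large enough implied constant. Alternatively one can avoid the issue altogether by shrinking the end blocks slightly and estimating the discarded piece directly.
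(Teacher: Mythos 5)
Your proposal is correct and coincides with the paper's argument, which consists of exactly the same dyadic covering of $(A,B]$ followed by an application of Lemma~\ref{lem:Cong Aver q} to each of the $O(\log B)$ blocks (the paper states this in a single sentence without spelling out the hypothesis checks). Your additional care with the $O(1/n)$ boundary discrepancies and the observation that the geometric decay of $A_j^{-\varepsilon/2}$ would even remove the $\log B$ factor are both sound and go slightly beyond what the paper records.
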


\begin{lemma}
\label{two_windows}
Keeping the notation of Lemma \ref{lem:Cong Med q}, suppose that
$$
  \frac{\kappa}{2} \le \varrho \le \frac{1}{2}.
$$
Then
$$
\# \cN_n(\vec{d}, q) \ll   \#\cN_n(\vec{d}) 
q^{-1+\kappa/2\varrho}.
$$
\end{lemma}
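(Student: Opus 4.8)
The plan is to re-run the block argument from the proof of Lemma~\ref{lem:Cong Med q}, but using \emph{two} disjoint windows instead of one. The hypothesis $\varrho\le 1/2$ is exactly what leaves room for two blocks of width $r\approx\varrho n$ inside $\{0,\dots,n-1\}$, while a single congruence modulo $q$ still pins down the digits of one window once the digits of the other window (and all digits outside the two windows) are chosen. Hence we only ``pay'' for the free positions of the \emph{smaller} window, i.e.\ we save a factor $2^{\max(w_1,w_2)}$ rather than a single $2^{w_h}$; a one-line averaging gives $\max(w_1,w_2)\ge r-k/2$, and dividing the resulting exponent by $\varrho n=\log_2 q$ produces precisely $-1+\kappa/2\varrho$.

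In detail: since $\vec{d}\in\cD_{k,n}^*$ every element of $\cN_n(\vec{d})$ is odd, so for even $q$ we have $\cN_n(\vec{d},q)=\emptyset$ and there is nothing to prove; thus assume $q\ge 3$ is odd. Put $r=\rf{\log q/\log 2}-1$, so $2^r<q$ and $\varrho n-1\le r<\varrho n$; since $\varrho\le 1/2$ this forces $2r<n$, so $W_1=\{0,\dots,r-1\}$ and $W_2=\{n-r,\dots,n-1\}$ are disjoint subintervals of $\{0,\dots,n-1\}$. Let $w_i$ be the number of free positions in $W_i$; as there are only $k$ fixed positions altogether, $(r-w_1)+(r-w_2)\le k$, whence $\max(w_1,w_2)\ge\tfrac12(w_1+w_2)\ge r-\tfrac{k}{2}$. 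To bound $\#\cN_n(\vec{d},q)$ I would first choose the digits on the $(n-k)-w_1-w_2$ free positions outside $W_1\cup W_2$, in all $2^{(n-k)-w_1-w_2}$ ways; this fixes the contribution $F$ of those positions, and the condition $s\equiv 0\pmod q$ becomes
$$
s_1+2^{n-r}s_2\equiv -F\pmod q,\qquad s_1=\sum_{i=0}^{r-1}d_i2^i,\quad s_2=\sum_{i=n-r}^{n-1}d_i2^{i-(n-r)},
$$
with $s_1,s_2\in[0,2^r)$. Since $q$ is odd, $\gcd(2^{n-r},q)=1$, so each admissible value of $s_1$ (there are at most $2^{w_1}$ of them) forces $s_2$ modulo $q$, hence uniquely as $2^r<q$; symmetrically each admissible $s_2$ forces $s_1$. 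Thus there are at most $2^{\min(w_1,w_2)}$ admissible pairs $(s_1,s_2)$, and so
$$
\#\cN_n(\vec{d},q)\le 2^{(n-k)-w_1-w_2}\cdot 2^{\min(w_1,w_2)}=2^{(n-k)-\max(w_1,w_2)}\le 2^{(n-k)-r+k/2}.
$$
Using $\#\cN_n(\vec{d})=2^{n-k}$, $r\ge\varrho n-1$, $2^{-\varrho n}=q^{-1}$ and $2^{k/2}=2^{\kappa n/2}=q^{\kappa/2\varrho}$, the right-hand side is $\le 2\,\#\cN_n(\vec{d})\,q^{-1+\kappa/2\varrho}$, which is the claimed bound with an absolute implied constant.

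The argument is essentially bookkeeping once the two-window idea is in place; the only steps needing attention are the disjointness inequality $2r<n$ (exactly where $\varrho\le 1/2$ enters) and keeping the $O(1)$ from $r=\rf{\log q/\log 2}-1$ under control so that the exponent comes out cleanly as $-1+\kappa/2\varrho$ rather than with a spurious additive term. I do not expect any substantive obstacle: in particular none of the exponential- or character-sum machinery of the earlier lemmas is needed here, the input being purely the pigeonhole observation that two disjoint windows contain at least $2r-k$ free positions between them.
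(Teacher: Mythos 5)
Your proof is correct and is essentially the paper's own argument: the paper likewise splits the $n$ positions into two outer blocks of size about $\varrho n$ (plus a middle block), pigeonholes to find at least $n(\varrho-\kappa/2)+O(1)$ free positions in one of them, and observes that once all other bits are chosen the congruence modulo $q$ pins down that window. Your bookkeeping via $\max(w_1,w_2)\ge r-k/2$ and the explicit handling of even $q$ are just minor presentational variants of the same idea.
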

\begin{proof}
We use a similar, but simpler argument as in the proof of Lemma
\ref{lem:Cong Aver q}. As $\varrho \le \frac{1}{2}$, we can divide the
$n$ bits into three blocks $W_1, W_2, W_3$ (from the left to the right),
such that $W_1$ and $W_3$ have size $\varrho n + O(1)$, each, and $W_2$
has size $(1-2\varrho)n+O(1)$. Then in one of $W_1$ and $W_3$, say $W_1$,
there must be at least
$$
  \frac{1}{2}\left((1-\kappa)n-(1-2\varrho)n\right) + O(1) =
  n(\varrho-\kappa/2)+O(1)
$$
many free positions. Once all the bits outside $W_1$ have been chosen,
a congruence modulo $q$ fixes all $\gg n(\varrho-\kappa/2)$ remaining
free positions in $W_1$, whence
$$
\# \cN_n(\vec{d}, q) \ll \#\cN_n(\vec{d}) 2^{-n(\varrho-\kappa/2)}
\ll \#\cN_n(\vec{d}) q^{-1+\kappa/2\varrho},
$$
which concludes the proof. 
\end{proof}

To apply Lemma~\ref{lem:Cong Med q} we also need the 
following technical statement.

\begin{lemma}
\label{lem:theta} For  $1> \kappa> 0$
the function $\vartheta(\kappa, \varrho)$ given by~\eqref{eq:theta}
is monotonically decreasing as $\varrho$ is increasing. 
\end{lemma}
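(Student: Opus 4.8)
The plan is to sidestep differentiating the square root in \eqref{eq:tau} and instead use the quadratic that $\tau(\kappa,\varrho)$ satisfies. Substituting $\tau = \varrho\vartheta$ into $\tau^2 - (1+\varrho)\tau + \varrho(1-\kappa) = 0$ and dividing by $\varrho > 0$ shows that $\vartheta = \vartheta(\kappa,\varrho)$ is a root of
$$
Q(\vartheta) := \varrho\,\vartheta^2 - (1+\varrho)\,\vartheta + (1-\kappa);
$$
comparing \eqref{eq:tau} and \eqref{eq:theta} with the quadratic formula for $Q$, whose discriminant equals $(1+\varrho)^2 - 4\varrho(1-\kappa) = (1-\varrho)^2 + 4\varrho\kappa > 0$, one checks that $\vartheta(\kappa,\varrho)$ is precisely the \emph{smaller} root of $Q$. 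I would also record the elementary bounds $0 < \tau(\kappa,\varrho) < \varrho$, equivalently $0 < \vartheta(\kappa,\varrho) < 1$: the quadratic in $\tau$ takes the value $\varrho(1-\kappa) > 0$ at $\tau = 0$ and $-\varrho\kappa < 0$ at $\tau = \varrho$, so its smaller root lies strictly in $(0,\varrho)$; this is the only place the hypothesis $0 < \kappa < 1$ enters.

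Next I would differentiate the identity $Q(\vartheta(\kappa,\varrho)) = 0$ with respect to $\varrho$, keeping $\kappa$ fixed. With $\vartheta' = \partial\vartheta/\partial\varrho$ and $Q'(\vartheta) = 2\varrho\vartheta - (1+\varrho)$ this gives $\vartheta^2 - \vartheta + Q'(\vartheta)\,\vartheta' = 0$, hence
$$
\vartheta' = \frac{\vartheta(1-\vartheta)}{Q'(\vartheta)}.
$$
The numerator $\vartheta(1-\vartheta)$ is positive because $0 < \vartheta < 1$, while $Q'(\vartheta)$ is the value of the derivative of the upward-opening parabola $Q$ at its smaller root and is therefore strictly negative; explicitly $Q'(\vartheta) = -\sqrt{(1-\varrho)^2 + 4\varrho\kappa}$. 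Consequently $\vartheta' < 0$ for all admissible $\varrho$, which is exactly the claimed monotonicity.

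I do not expect a real obstacle here; the only point needing a moment's attention is the identification of $\vartheta(\kappa,\varrho)$ with the \emph{smaller} root of $Q$, since that is what pins down the sign $Q'(\vartheta) < 0$ — for $\varrho \ge 1$ the crude bound $\vartheta < 1$ alone would not suffice. As an independent check one can instead solve $Q(\vartheta) = 0$ for $\varrho$ to get $\varrho = (\vartheta - (1-\kappa))/(\vartheta^2 - \vartheta)$, valid and positive exactly for $\vartheta \in (0, 1-\kappa)$, and verify that $d\varrho/d\vartheta < 0$ there; this reduces to $\vartheta^2 - 2(1-\kappa)\vartheta + (1-\kappa) > 0$, and the left-hand side attains its minimum over $\vartheta$ at $\vartheta = 1-\kappa$ with positive value $\kappa(1-\kappa)$, so $\varrho \mapsto \vartheta$ is a strictly decreasing bijection, confirming the result.
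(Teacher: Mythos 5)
Your proof is correct, and it reaches the same conclusion by a somewhat cleaner route than the paper. The paper differentiates the explicit closed form of $\vartheta(\kappa,\varrho)$ directly, obtaining
$$
\frac{\partial \vartheta}{\partial \varrho}=\frac{1+(2\kappa-1)\varrho-\sqrt{1+(4\kappa-2)\varrho+\varrho^2}}{2\varrho^2\sqrt{1+(4\kappa-2)\varrho+\varrho^2}},
$$
and then verifies by hand that the numerator $1+c\varrho-\sqrt{1+2c\varrho+\varrho^2}$ is negative for $|c|<1$. Your implicit differentiation of $Q(\vartheta)=\varrho\vartheta^2-(1+\varrho)\vartheta+(1-\kappa)=0$ produces the identical quantity in disguise --- one can check that $\vartheta(1-\vartheta)/Q'(\vartheta)$ equals the displayed fraction, since $Q'(\vartheta)=-\sqrt{(1-\varrho)^2+4\varrho\kappa}$ --- but it replaces the paper's ad hoc sign verification with two structural facts: $0<\vartheta<1$ (from $P(0)>0$, $P(\varrho)<0$, which is where $0<\kappa<1$ is used) and the negativity of the derivative of an upward parabola at its smaller root. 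Both the identification of $\vartheta$ as the smaller root of $Q$ and your appeal to strict positivity of the discriminant (which needs $\kappa>0$ to rule out a double root at $\varrho=1$) are correctly handled, and your independent check via $\varrho=(\vartheta-(1-\kappa))/(\vartheta^2-\vartheta)$ is also sound. No gaps.
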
 

\begin{proof} 
The result follows from the observation that the derivative
$$
\frac{{\partial } \vartheta}{{\partial } \varrho}=
\frac{1 + (-1 + 2 \kappa) \varrho - \sqrt{1 + (-2 + 4 \kappa) \varrho + \varrho^2}}{
2 \varrho^2 \sqrt{ 1 + (-2 + 4 \kappa) \varrho + \varrho^2}}
$$
is negative, as $1+2c\varrho+ \varrho^2>0$ and $1+c\varrho -\sqrt{1+2c\varrho+ \varrho^2}<0$ when $|c|<1$. 
\end{proof}

\subsection{Some results from additive combinatorics}
\label{sec:addcomb}

We now recall a recent result by Schoen~\cite[Theorem~3.3]{Sch}
in additive combinatorics. As in~\cite{DES1}, we 
note that~\cite[Theorem~3.3]{Sch} is only stated
for subset sums but 
can be easily extended to  Hilbert cubes. 

\begin{lemma}
\label{lem:hilbcube}
For any $a_0 \in \F_p$ and pairwise distinct $a_1, \ldots, a_d \in \F_p$
such that $d \ge 8(p/\log p)^{1/D}$, where $D$ is an integer satisfying
$$
0<D\le \sqrt{\frac{\log p}{2 \log \log p}},
$$  
the Hilbert cube~\eqref{eq:hilbert_cube} contains an arithmetic progression
of length $L$ where
$$
  L \ge 2^{-10} (d/\log p)^{1+1/(D-1)}.
$$
\end{lemma}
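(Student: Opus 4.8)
The plan is to reduce the claim to Schoen's result on subset sums, so that essentially no additional argument is required. Put $A = \{a_1, \ldots, a_d\} \subseteq \F_p$. Since $a_1, \ldots, a_d$ are pairwise distinct we have $\# A = d$, and directly from~\eqref{eq:hilbert_cube}
$$
\cH(a_0; a_1, \ldots, a_d) = a_0 + \Sigma(A), \qquad
\Sigma(A) := \left\{\sum_{i \in I} a_i ~:~ I \subseteq \{1, \ldots, d\}\right\}
$$
is the translate by $a_0$ of the set of subset sums of $A$ (the empty sum being $0$). Because adding a fixed element of $\F_p$ to every term of an arithmetic progression produces an arithmetic progression of the same length and the same common difference, it is enough to find a progression of length $L$ inside $\Sigma(A)$.

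First I would verify that the hypotheses here are exactly those of~\cite[Theorem~3.3]{Sch}: the cardinality bound $\# A = d \ge 8(p/\log p)^{1/D}$ and the admissibility range $0 < D \le \sqrt{\log p/(2\log\log p)}$ of the integer $D$ match verbatim. Invoking that theorem for $A$ then yields an arithmetic progression $\{x, x+g, \ldots, x+(L-1)g\} \subseteq \Sigma(A)$ with $g \neq 0$ and $L \ge 2^{-10}(d/\log p)^{1+1/(D-1)}$, and translating by $a_0$ gives the progression $\{a_0+x, a_0+x+g, \ldots, a_0+x+(L-1)g\}$ inside $\cH(a_0; a_1, \ldots, a_d)$, as required.

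The one point that genuinely needs comment is that~\cite[Theorem~3.3]{Sch} is phrased for subset-sum sets rather than for Hilbert cubes; but, exactly as in~\cite{DES1}, this discrepancy is removed by the translation observation above, at no cost. There is thus no real obstacle: the argument is a bookkeeping reduction to~\cite[Theorem~3.3]{Sch}, and the distinctness hypothesis on $a_1, \ldots, a_d$ is used only to ensure that the stated bound on $d$ is the bound on $\#A$ that Schoen's theorem requires. If one wanted a self-contained treatment one would additionally recall the shape of Schoen's argument — partition $A$ into about $D$ blocks of size $\sim d/D$, in each block locate by a Dirichlet-type pigeonhole step a large sub-block whose subset sums contain a long progression, arrange by a further pigeonholing over common differences that these progressions are compatible, and add them up, the exponent $1+1/(D-1)$ and the polylogarithmic losses emerging from this iteration — but none of this is needed for the deduction here.
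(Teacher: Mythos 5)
Your proposal is correct and coincides with the paper's treatment: the paper gives no proof beyond the remark that Schoen's Theorem~3.3, stated for subset sums, ``can be easily extended to Hilbert cubes,'' and your observation that $\cH(a_0;a_1,\ldots,a_d)=a_0+\Sigma(A)$ with $\#A=d$ (using the distinctness of the $a_i$) together with translation-invariance of arithmetic progressions is exactly that easy extension made explicit.
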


For a set 
$\cS \subseteq \F_p$ 
we use $\Sigma_k(\cS)$ to denote the set 
of all $k$-elements subset sums of $\cS$, that is,
$$
\Sigma_k(\cS) = \left\{\sum_{t \in \cT} t~:~
\cT \subseteq \cS, \ \#\cT=k\right\}.
$$
We make use of the following result of  Dias da Silva and
Hamidoune~\cite[Theorem~4.1]{DiDaSiHa}.

\begin{lemma}
\label{lem:subset}
For a set $\cS \subseteq \F_p$ and an integer $k \ge 1$, we have 
$$
\# \Sigma_k(\cS) \ge \min\{p, k\#\cS -k^2 +1\}.
$$
\end{lemma}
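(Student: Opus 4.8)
The statement to prove is the Dias da Silva--Hamidoune theorem: for $\cS \subseteq \F_p$ and $k \ge 1$,
$$
\# \Sigma_k(\cS) \ge \min\{p,\ k\#\cS - k^2 + 1\}.
$$
Since this is quoted from the literature, the cleanest route is simply to cite~\cite{DiDaSiHa} and remark on the mechanism, but a self-contained plan would run as follows. Write $m = \#\cS$. If $k > m$ or $k=0$ both sides are to be interpreted trivially, so assume $1 \le k \le m$; and by symmetry $\Sigma_k(\cS)$ is a translate of $\Sigma_{m-k}(\cS)$ (subtract the total sum), so we may assume $k \le m/2$. The plan is to deduce the bound from the Cauchy--Davenport-type inequality for *restricted* sumsets, i.e. from the polynomial method of Alon--Nathanson--Ruzsa / Dias da Silva--Hamidoune. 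Concretely, one shows that the $k$-fold restricted sumset $\cS \,\hat{+}\, \cS \,\hat{+}\, \cdots \,\hat{+}\, \cS$ (sums of $k$ *pairwise distinct* elements) has size at least $\min\{p,\ k(m-k)+1\}$; here I have written it in the ANR normalization, and the identity $km - k^2 + 1 = k(m-k)+1$ shows this is exactly the claim.

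First I would set up the Combinatorial Nullstellensatz apparatus. Consider the polynomial
$$
P(x_1,\dots,x_k) \;=\; \prod_{1 \le i < j \le k} (x_j - x_i)\ \cdot\ \prod_{v \in V}\Bigl(\sum_{i=1}^k x_i - v\Bigr),
$$
where $V \subseteq \F_p$ is a hypothetical set of values *not* attained by $\Sigma_k(\cS)$, with $\#V = t$. If $P$ vanishes on $\cS^k$ — which it does, since the first factor kills the non-distinct tuples and the second kills the distinct ones by the assumption on $V$ — then provided $\deg P < k(m-1)$ one gets a contradiction via the Nullstellensatz: the coefficient of the monomial $x_1^{m-1}\cdots x_k^{m-1}$ must vanish, but a direct computation (expanding the Vandermonde as a signed sum over $S_k$, and the second product by the multinomial theorem) shows that coefficient equals, up to a nonzero scalar, a nonzero multinomial-type quantity — this is where one invokes that $\binom{k t}{t,t,\dots}$-style coefficient is not divisible by $p$, which is automatic as long as the degree constraint forces $t$ small enough. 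The total degree is $\binom{k}{2} + kt$, so the degree condition $\binom{k}{2} + kt \le k(m-1) - 1$, i.e. $t \le m - 1 - (k-1)/2 - 1/k$, i.e. essentially $t \le (m-k)$ after tidying, yields the bound $\#\Sigma_k(\cS) \ge m - t \ge \dots$; bookkeeping the exact integer thresholds reproduces $k(m-k)+1$.

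The main obstacle is verifying that the relevant coefficient is nonzero in $\F_p$: this is the heart of the Dias da Silva--Hamidoune argument and is genuinely delicate, since naively the coefficient of $\prod x_i^{m-1}$ in $P$ is a sum over permutations of products of multinomial coefficients, and one must show the sum does not vanish mod $p$. The classical resolution identifies this coefficient (via the theory of the *discrete derivative* / finite differences, or via the representation-theoretic interpretation in terms of Schur polynomials) with a single nonzero multinomial coefficient $\binom{kt + \binom{k}{2}}{\,t, t+1, \dots, t+k-1}$ or equivalent, whose $p$-adic valuation is controlled precisely by the degree bound. Since reproducing this in full is exactly the content of~\cite[Theorem~4.1]{DiDaSiHa}, for the purposes of this paper I would state the lemma with a reference and, if desired, include the Nullstellensatz set-up above as a one-paragraph sketch, flagging the coefficient computation as the step to be imported wholesale.
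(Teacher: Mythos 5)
The paper gives no proof of this lemma at all: it is quoted verbatim as \cite[Theorem~4.1]{DiDaSiHa}, so your primary plan (state it with a citation) is exactly what the paper does, and your identification of $k\#\cS-k^2+1=k(\#\cS-k)+1$ with the restricted-sumset bound is correct. One caveat on your optional Nullstellensatz sketch: the polynomial $\prod_{i<j}(x_j-x_i)\cdot\prod_{v\in V}\bigl(\sum_i x_i - v\bigr)$ is antisymmetric, so the coefficient of the symmetric monomial $x_1^{m-1}\cdots x_k^{m-1}$ is automatically zero; the Alon--Nathanson--Ruzsa argument instead targets a monomial $x_1^{c_1}\cdots x_k^{c_k}$ with distinct exponents and evaluates on grids $A_1\times\cdots\times A_k$ with $A_i\subseteq\cS$ of distinct cardinalities $c_i+1$, which is what produces the $-\binom{k}{2}-\binom{k+1}{2}$ correction summing to $-k^2$.
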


We now define
$$
\Sigma_*(\cS) = \bigcup_{k=0}^{\# \cS} \Sigma_k(\cS).
$$

Taking $k = \fl{\#\cS/2}$ in Lemma~\ref{lem:subset} we immediately 
derive:

\begin{cor}
\label{cor:subset}
For a set $\cS \subseteq \F_p$ and an integer $k \ge 1$ we have 
$$
\# \Sigma_*(\cS) \gg \min\{p, (\#\cS)^2 +1\}.
$$
\end{cor}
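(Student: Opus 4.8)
The plan is to deduce the corollary directly from Lemma~\ref{lem:subset}, choosing the subset-sum length so as to maximise the size guaranteed by that lemma. Write $m = \#\cS$. Since $\Sigma_k(\cS) \subseteq \Sigma_*(\cS)$ for every $k \in \{0, 1, \ldots, m\}$, it suffices to bound $\#\Sigma_k(\cS)$ from below for one well-chosen value of $k$; the quadratic $k \mapsto k(m-k)$ appearing in Lemma~\ref{lem:subset} is largest near $k = m/2$, so the natural choice is $k = \fl{m/2}$.

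Assume first that $m \ge 2$, so that $k = \fl{m/2} \ge 1$ and Lemma~\ref{lem:subset} applies. A short computation gives $k(m-k) \ge (m^2-1)/4$, whether $m$ is even or odd, and hence
$$
\#\Sigma_*(\cS) \ge \#\Sigma_{\fl{m/2}}(\cS) \ge
\min\left\{p,\ \frac{m^2-1}{4}+1\right\} \ge \frac14 \min\{p,\ m^2+1\}.
$$
For the remaining cases $m \in \{0, 1\}$ one notes directly that $0 \in \Sigma_0(\cS) \subseteq \Sigma_*(\cS)$, so $\#\Sigma_*(\cS) \ge 1 \ge \tfrac12\min\{p,\ m^2+1\}$. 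In all cases $\#\Sigma_*(\cS) \gg \min\{p,\ (\#\cS)^2+1\}$, with an absolute implied constant.

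There is no real obstacle: the entire content is the optimisation $k = \fl{m/2}$ in Lemma~\ref{lem:subset}. The only points needing a moment's care are checking that $k(m-k)$ at $k = \fl{m/2}$ stays within a constant factor of $(\#\cS)^2$, so that nothing is lost in passing to the $\gg$-notation, and treating the degenerate cases $\#\cS \le 1$ separately, since then $\fl{\#\cS/2} = 0$ and Lemma~\ref{lem:subset} is not directly applicable.
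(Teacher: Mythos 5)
Your proposal is correct and follows exactly the paper's route: the paper derives the corollary by taking $k = \fl{\#\cS/2}$ in Lemma~\ref{lem:subset}, which is precisely your optimisation, and your verification that $k(m-k) \ge (m^2-1)/4$ together with the separate treatment of $\#\cS \le 1$ just makes explicit what the paper leaves as immediate.
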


\section{Main Results}

\subsection{Squarefree integers with fixed digits}
\label{sec:sqfr}
Let $S_n(\vec{d})$ be the number of squarefree integers 
$s \in \cN_n(\vec{d})$. 

\begin{theorem}
\label{thm:SF} 
For any $\varepsilon > 0$, uniformly 
over integer  $k <(2/5 - \varepsilon) n$ and 
$\vec{d} \in \cD_{k,n}^*$, we have
$$
S_n(\vec{d})= \(\frac{8}{\pi^2} +o(1)\) \#\cN_n(\vec{d}).
$$ 
\end{theorem}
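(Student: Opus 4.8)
The plan is to use the standard squarefree sieve identity
$$
S_n(\vec{d}) = \sum_{q \ge 1} \mu(q) \#\cN_n(\vec{d}, q^2)
$$
and to estimate the contribution of the various ranges of $q$ separately. Only odd $q$ contribute, since every element of $\cN_n(\vec{d})$ is odd when $\vec{d}\in\cD_{k,n}^*$, so $\#\cN_n(\vec{d}, q^2) = 0$ for even $q$; this is exactly why we gain the factor $8/\pi^2$ rather than $6/\pi^2$, as $\sum_{q\ \mathrm{odd}}\mu(q)/q^2 = (1-1/4)^{-1}\cdot 6/\pi^2\cdot$... more precisely $\prod_{p\ \mathrm{odd}}(1-p^{-2}) = (6/\pi^2)/(1-1/4) = 8/\pi^2$. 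The trivial bound $\#\cN_n(\vec{d},q^2)\le 2^{n-k}/q^2 \cdot (\dots)$ is too weak, so one has to split: (i) small $q$, say $q^2 \le n^{1/10\kappa}$, handled by Lemma~\ref{lem:Cong Small q}; (ii) a middle range; (iii) an average range near $q^2 \asymp 2^{\varrho n}$ with $\varrho$ between $1/4$ and $1/2$, handled by Corollary~\ref{cor:Cong Aver q}; and (iv) large $q$, where $q^2 > 2^{n}$ forces $\#\cN_n(\vec{d},q^2)\le 1$ and one just counts divisors.

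More concretely, here is the order of the steps. First, for $q$ with $q^2 \le n^{1/10\kappa}$ (equivalently $q \le n^{1/20\kappa}$), Lemma~\ref{lem:Cong Small q} gives $\#\cN_n(\vec{d},q^2) = q^{-2}\#\cN_n(\vec{d}) + O(\#\cN_n(\vec{d})2^{-\sqrt n})$; summing $\mu(q)q^{-2}$ over odd $q \le n^{1/20\kappa}$ and completing the sum to infinity (with tail $O(n^{-1/20\kappa})$) produces the main term $(8/\pi^2 + o(1))\#\cN_n(\vec{d})$, while the error terms contribute $O(\#\cN_n(\vec{d})\, n^{1/20\kappa}\,2^{-\sqrt n}) = o(\#\cN_n(\vec{d}))$. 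Second, for the intermediate and large ranges one bounds $\sum_q |\mu(q)| \#\cN_n(\vec{d},q^2)$ and shows it is $o(\#\cN_n(\vec{d}))$. Using Lemma~\ref{lem:Cong Med q} together with the monotonicity Lemma~\ref{lem:theta}, the bound $\#\cN_n(\vec{d},q^2)\ll \#\cN_n(\vec{d}) q^{-2\vartheta(\kappa,\varrho)}$ (with $q^2 = 2^{2\varrho n}$, so the relevant density parameter is $2\varrho$) is summable once $2\vartheta > 1$ for the $q$ of size up to about $2^{n/4}$; and for $q$ in the dyadic ranges with $q \asymp 2^{\varrho n}$, $1/4 \le \varrho \le 1/2$, one invokes Corollary~\ref{cor:Cong Aver q} to get an extra saving $A^{-\varepsilon/2}$ on the whole dyadic block, which is where the cruder per-$q$ bound of Lemma~\ref{lem:Cong Med q} would fail (there $\vartheta(\kappa,\varrho) \to $ something $\le 1/2$ as $\varrho \to 1/2$, so individual terms are not summable — one genuinely needs the averaging, and also Lemma~\ref{two_windows} in the relevant sub-range). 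Finally for $q^2 > 2^{n(1+\varepsilon)}$ one has $\#\cN_n(\vec{d},q^2)\le 1$, and the number of such $q$ with a solution is at most the number of integers $\le 2^n$ of the shape $2^r a_1 + c_1$ for bounded $a_1,c_1$, which is $2^{o(n)}\cdot 2^{O(?)}$; this needs to be checked to be $o(\#\cN_n(\vec{d})) = o(2^{n-k})$, which is where the constraint $k < (2/5-\varepsilon)n$ presumably gets used (combined with the conditions~\eqref{eq:cond1}, \eqref{eq:cond2} of Lemma~\ref{lem:Cong Aver q}, which translate, at $\varrho$ close to the endpoints $1/4$ and $1/2$, into $\kappa < 2/5$; note $3/7 > 2/5$ so Lemma~\ref{lem:Cong Aver q} has room to spare and the binding constraint is~\eqref{eq:cond1}: $(3+4\varepsilon)\varrho \le 2(1-\kappa)$ at $\varrho = 1/2$ gives $\kappa \le 1/4 - \varepsilon$... so actually one must use larger $\varrho$ only up to where \eqref{eq:cond1} holds and cover the rest, $\varrho$ close to $1/2$, by Lemma~\ref{two_windows}, whose bound $q^{-1+\kappa/2\varrho}$ is summable iff $\kappa < 2\varrho$, always true here — but it loses a dyadic-block factor, needing $\kappa/2\varrho < 1$ strictly, fine).

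The main obstacle will be organizing the bookkeeping so that every dyadic range $2^{\varrho n} < q \le 2^{\varrho n + 1}$ with $0 < \varrho \le n/2$ (say $q^2 \le 2^{n(1+\varepsilon)}$) is covered by exactly one of Lemmas~\ref{lem:Cong Small q}, \ref{lem:Cong Med q}, \ref{two_windows} or Corollary~\ref{cor:Cong Aver q}, with a uniform saving, and verifying that at the "seams" between these ranges the hypotheses (the inequalities~\eqref{kappa37}, \eqref{eq:cond1}, \eqref{eq:cond2} and $\kappa/2 \le \varrho$) are simultaneously satisfiable precisely when $\kappa < 2/5 - \varepsilon$. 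I would expect the delicate point to be the window $\varrho$ near $1/2$ (i.e.\ $q \asymp \sqrt{2^n}$), where Lemma~\ref{lem:Cong Med q} is too weak and one has to balance the use of Lemma~\ref{two_windows} against Corollary~\ref{cor:Cong Aver q}; it is this balance that forces the threshold $2/5$ rather than something closer to $3/7$ or $1/2$.
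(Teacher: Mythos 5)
Your proposal follows essentially the same route as the paper: the identity $S_n(\vec{d})=\sum_q\mu(q)\#\cN_n(\vec{d},q^2)$, the main term from Lemma~\ref{lem:Cong Small q} for $q\le n^{1/20\kappa}$ giving $8/\pi^2$ via the odd Euler product, Lemma~\ref{lem:Cong Med q} with Lemma~\ref{lem:theta} (using $\vartheta(2/5,2/5)=1/2$) in the next range, Lemma~\ref{two_windows} up to $q\le 2^{n/4}$, and Corollary~\ref{cor:Cong Aver q} on dyadic blocks beyond that. The one place your bookkeeping goes astray is the top range. You try to push the nontrivial estimates all the way to $q\asymp 2^{n/2}$, note correctly that condition~\eqref{eq:cond1} at $\varrho=1/2$ would force $\kappa\le 1/4$, and then propose to rescue $\varrho$ near $1/2$ with Lemma~\ref{two_windows}; but that lemma is applied here with modulus $q^2=2^{2\varrho n}$, so its hypothesis (modulus density at most $1/2$) caps it at $q\le 2^{n/4}$ and it cannot cover $\varrho$ near $1/2$. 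The paper avoids this entirely: it stops the careful analysis at $W=2^{\zeta n}$ with $\zeta$ just above $2/5$, chosen so that~\eqref{eq:zeta} holds exactly when $\kappa<2/5-O(\varepsilon)$, and for $q>W$ the trivial bound $\#\cN_n(\vec{d},q^2)\le 2^n/q^2$ summed over $q>W$ gives $O(2^{n}W^{-1})=O(\#\cN_n(\vec{d})2^{(\kappa-\zeta)n})=o(\#\cN_n(\vec{d}))$ since $\zeta>2/5>\kappa$. So no divisor-counting argument for $q^2>2^n$ is needed (indeed $\#\cN_n(\vec{d},q^2)=0$ there, as $0<s<2^n<q^2$), and the threshold $2/5$ comes jointly from $\vartheta(\kappa,2/5)>1/2$ and from~\eqref{eq:cond1}--\eqref{eq:cond2} at $\zeta\approx 2/5$, not from any balancing at $\varrho=1/2$.
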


\begin{proof} The inclusion-exclusion principle yields
$$
S_n(\vec{d}) = \sum_{q=1}^\infty \mu(q) \# \cN_n(\vec{d}, q^2), 
$$
where $\mu(q)$ is the M{\"o}bius function, see~\cite[Section~16.3]{HardyWright}.

As before, we define $\kappa = k/n$ and we also use the
function $\vartheta(\kappa, \varrho)$ that is given by~\eqref{eq:theta}.

For $\kappa<2/5-\varepsilon$ we have
\begin{equation}
\label{eq:25}
  \vartheta(\kappa, 2/5)>1/2
\end{equation}
as $\vartheta(2/5, 2/5)=1/2$ and for fixed $\varrho$, the function
$\vartheta(\kappa, \varrho)$ given by \eqref{eq:theta} is obviously decreasing in
$\kappa$.
Choose $\zeta>2/5$ such that
\begin{equation}
\label{eq:zeta}
 (3+4\varepsilon) \zeta \le  2(1-\kappa), \qquad 
   \zeta (1+5 \varepsilon) < 4(1-\kappa)-2,
\end{equation}
which for sufficiently small $\varepsilon>0$ is possible since
$\kappa< 2/5 - \varepsilon$. 
Note that in particular, $\zeta>\kappa$.

We set 
$$
T = n^{1/20  \kappa}, \qquad U=2^{n/5},
\qquad V = 2^{n/4},  \qquad W = 2^{\zeta n}, 
$$
and  write
\begin{equation}
\label{eq:Si}
S_n(\vec{d}) = S_1 + S_2 + S_3 + S_4 + S_5,
\end{equation}
where
\begin{equation*}
\begin{split}
S_1 & =\sum_{q \le T} \mu(q) \# \cN_n(\vec{d}, q^2),\\
S_2 & =\sum_{T< q \le U} \mu(q) \# \cN_n(\vec{d}, q^2),\\
S_3 & =\sum_{U< q \le V} \mu(q) \# \cN_n(\vec{d}, q^2),\\
S_4 & =\sum_{V < q \le  W} \mu(q) \# \cN_n(\vec{d}, q^2),\\
S_5 & =\sum_{q > W} \mu(q) \# \cN_n(\vec{d}, q^2).
\end{split}
\end{equation*}
We use Lemma~\ref{lem:Cong Small q}  for $q \le T$, 
getting the main term
\begin{equation*}
\begin{split}
S_1 &=  \#\cN_n(\vec{d}) \sum_{\substack{q \le T\\ q~\text{odd}}} \frac{\mu(q)}{q^2}  + 
O\( \#\cN_n(\vec{d}) T 2^{-\sqrt{n}}\) \\
& =   \#\cN_n(\vec{d}) \sum_{q~\text{odd}} \frac{\mu(q)}{q^2}  + 
O\( \#\cN_n(\vec{d}) T 2^{-\sqrt{n}} + \#\cN_n(\vec{d}) T^{-1} \) \\
& =   \#\cN_n(\vec{d}) \prod_{\substack{\ell \ge 3\\
\ell~\text{prime}}}
\(1 - \frac{1}{\ell^2}\)  + 
O\( \#\cN_n(\vec{d}) T 2^{-\sqrt{n}} + \#\cN_n(\vec{d}) T^{-1}\)\\
& =   \frac{4}{3}\#\cN_n(\vec{d}) \prod_{\ell~\text{prime}}
\(1 - \frac{1}{\ell^2}\)  + 
O\( \#\cN_n(\vec{d}) T 2^{-\sqrt{n}} + \#\cN_n(\vec{d}) T^{-1}\).
\end{split}
\end{equation*}
So we now obtain the main term
\begin{equation}
\label{eq:S1}
S_1 =   \(\frac{8}{\pi^2} + o(1)\)\#\cN_n(\vec{d}), 
\end{equation}
see~\cite[Theorem~280]{HardyWright}. 

To estimate $S_2$, we use  Lemma~\ref{lem:Cong Med q}. 
First we note that by Lemma~\ref{lem:theta}, for  $T < q \le U$, 
we have 
$$
\vartheta(\kappa, 2\varrho)  \ge  \vartheta(\kappa, 2/5), 
$$
where, in analogy to Lemma~\ref{lem:Cong Med q},  $\varrho$ is defined by
$q = 2^{\varrho n}$. Hence  in this range we have
$$
\# \cN_n(\vec{d}, q^2) \ll
 \#\cN_n(\vec{d}) q^{-2\vartheta(\kappa, 2\varrho)} \ll \#\cN_n(\vec{d}) q^{-2\vartheta(\kappa, 2/5)}.
$$
Since by~\eqref{eq:25} we have $2\vartheta(\kappa, 2/5)> 1$,  we now derive
\begin{equation}
\label{eq:S2}
S_2 \ll \#\cN_n(\vec{d}) T^{1-2\vartheta(\kappa, 2/5)} = o\(\#\cN_n(\vec{d})\). 
\end{equation}

For $S_3$ we use a similar argument as for $S_2$, 
this time with Lemma~\ref{two_windows}
instead of Lemma~\ref{lem:Cong Med q}, noting that with $\kappa<2/5$
and $\varrho \ge 1/5$ we obtain
$$
\# \cN_n(\vec{d}, q^2) \ll \#\cN_n(\vec{d}) q^{2(-1+\kappa/4\varrho)}
\ll \#\cN_n(\vec{d}) q^{-1-\delta}
$$
for some sufficiently small $\delta>0$, so again
\begin{equation}
\label{bound_s3}
  S_3 = o\(\#\cN_n(\vec{d})\). 
\end{equation}

To estimate $S_4$, we use Corollary~\ref{cor:Cong Aver q}, which
by~\eqref{eq:zeta} 
applies with some sufficiently small $\varepsilon >0$,  getting 
\begin{equation}
\label{eq:S3}
S_4 \ll \# \cN_n(\mathbf{d}) V^{-\varepsilon/2} \log W = o\(\#\cN_n(\vec{d})\).
\end{equation}

Finally, we use the trivial bound $\# \cN_n(\vec{d}, q^2) \le 2^n/q^2$
for $q > W$ and using $\zeta>\kappa$ we derive 
\begin{equation}
\label{eq:S4}
S_5 \ll 2^n W^{-1} \ll \#\cN_n(\vec{d}) 2^{\kappa n - \zeta n} 
= o\(\#\cN_n(\vec{d})\). 
\end{equation}
Substituting~\eqref{eq:S1}, \eqref{eq:S2}, \eqref{bound_s3}, \eqref{eq:S3} and~\eqref{eq:S4} into~\eqref{eq:Si}, we now conclude the 
proof.  
 \end{proof}

 \subsection{Average values of the Euler function}
 \label{sec:Euler}

We now consider the average value
$$
F_n(\vec{d}) = \sum_{s\in \cN_n(\vec{d})} \frac{\varphi(s)}{s}
$$
with the Euler function $\varphi(s)$, see~\cite[Section~16.3]{HardyWright}.

\begin{theorem}
\label{thm:AverEuler} 
For any $\varepsilon > 0$, uniformly 
over integers  $k <(1 - \varepsilon) n$ and 
$\vec{d} \in \cD_{k,n}^*$, we have
$$
F_n(\vec{d})= \(\frac{8}{\pi^2} +o(1)\) \#\cN_n(\vec{d}).
$$ 
\end{theorem}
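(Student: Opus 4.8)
The plan is to follow the same strategy as in the proof of Theorem~\ref{thm:SF}, but exploiting the fact that $\varphi(s)/s$ is a \emph{bounded} multiplicative function, which both simplifies the tail and removes the need for the delicate average bound of Corollary~\ref{cor:Cong Aver q}. First I would expand $\varphi(s)/s = \sum_{q \mid s} \mu(q)/q$, interchange summation, and write
$$
F_n(\vec{d}) = \sum_{q=1}^{2^n} \frac{\mu(q)}{q} \#\cN_n(\vec{d}, q),
$$
the sum being finite since every $s \in \cN_n(\vec{d})$ satisfies $s < 2^n$. Note that here one sums over $\#\cN_n(\vec{d}, q)$ with modulus $q$ itself, not $q^2$, which is why the admissible range of $\kappa$ widens all the way to $1 - \varepsilon$.

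Next I would split the range of $q$ into the same kind of pieces: a short initial segment $q \le T$ with $T = n^{1/20\kappa}$, a long middle range $T < q \le 2^{\varrho_0 n}$ for a suitable small threshold $\varrho_0$, and a tail $q > 2^{\varrho_0 n}$. On $q \le T$ (restricted to odd $q$, say, after first handling the factor $2 \nmid s$ built into $\cD_{k,n}^*$), Lemma~\ref{lem:Cong Small q} gives
$$
\sum_{q \le T} \frac{\mu(q)}{q} \#\cN_n(\vec{d}, q)
= \#\cN_n(\vec{d}) \sum_{\substack{q~\text{odd}}} \frac{\mu(q)}{q^2}
+ O\(\#\cN_n(\vec{d})\(T 2^{-\sqrt{n}} + T^{-1}\)\),
$$
and exactly as in the proof of Theorem~\ref{thm:SF} this produces the main term $(8/\pi^2 + o(1))\#\cN_n(\vec{d})$, since $\sum_{q~\text{odd}} \mu(q)/q^2 = \tfrac43 \prod_\ell (1 - \ell^{-2}) = 8/\pi^2$. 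For the middle range I would apply Lemma~\ref{lem:Cong Med q}: writing $q = 2^{\varrho n}$ one gets $\#\cN_n(\vec{d},q) \ll \#\cN_n(\vec{d}) q^{-\vartheta(\kappa,\varrho)}$, and by Lemma~\ref{lem:theta} $\vartheta(\kappa,\varrho) \ge \vartheta(\kappa,\varrho_0)$ for $\varrho \le \varrho_0$, so the contribution is
$$
\ll \#\cN_n(\vec{d}) \sum_{q > T} q^{-1-\vartheta(\kappa,\varrho_0)}
\ll \#\cN_n(\vec{d})\, T^{-\vartheta(\kappa,\varrho_0)} = o\(\#\cN_n(\vec{d})\),
$$
provided $\vartheta(\kappa,\varrho_0) > 0$, i.e. $\kappa < 1$. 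Finally, for the tail $q > 2^{\varrho_0 n}$ I would use the trivial bound $\#\cN_n(\vec{d},q) \le 2^n/q + 1$ together with $\#\cN_n(\vec{d}) = 2^{n-k} = 2^{(1-\kappa)n}$, so that for $\varrho_0$ chosen with $\kappa < \varrho_0 < 1$ (possible since $\kappa < 1 - \varepsilon$) the tail is
$$
\sum_{q > 2^{\varrho_0 n}} \frac{1}{q}\(\frac{2^n}{q} + 1\) \ll 2^n 2^{-\varrho_0 n} + n
\ll \#\cN_n(\vec{d}) \, 2^{(\kappa - \varrho_0)n} + n = o\(\#\cN_n(\vec{d})\).
$$
Summing the three contributions gives the claimed asymptotic.

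The main point to watch, rather than a genuine obstacle, is the interplay between the parity condition $\delta_0 = 1$ (so all $s$ are odd) and the even values of $q$ in the Möbius expansion: since $s$ is odd, $\#\cN_n(\vec{d}, q) = 0$ whenever $q$ is even, so the sum $\sum_q \mu(q)/q\cdot\#\cN_n(\vec{d},q)$ automatically restricts to odd $q$, which is exactly what makes the Euler product come out as $8/\pi^2$ rather than $6/\pi^2$; one should state this explicitly. A secondary technical point is verifying that Lemma~\ref{lem:Cong Med q} is applicable throughout the chosen middle range — its hypotheses require only $k < n(1-\varepsilon)$ and $q \le 2^{n(1-\varepsilon)}$, which hold here — and that the threshold $\varrho_0$ can simultaneously satisfy $\kappa < \varrho_0 < 1$ and keep $\vartheta(\kappa,\varrho_0)$ bounded away from $0$; both follow from $\kappa < 1 - \varepsilon$ by continuity of $\vartheta$. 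No new ideas beyond the toolkit of Section~\ref{sec:cong} are needed.
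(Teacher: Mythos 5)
Your proposal is correct and follows essentially the same route as the paper: the M\"obius expansion of $\varphi(s)/s$, a three-way split of the range of $q$, Lemma~\ref{lem:Cong Small q} for the main term, Lemma~\ref{lem:Cong Med q} combined with the monotonicity from Lemma~\ref{lem:theta} for the middle range (your $\varrho_0$ is the paper's $\xi$, whose existence the paper gets from $\vartheta(\kappa,1)=1-\sqrt{\kappa}>0$), and the trivial bound for the tail. The observations you flag (the restriction to odd $q$ yielding $8/\pi^2$, and the absence of any need for Corollary~\ref{cor:Cong Aver q} since the modulus is $q$ rather than $q^2$) match the paper's treatment.
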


\begin{proof} Using the the well-known formula
$$
 \frac{\varphi(s)}{s} = \sum_{q \mid s}  \frac{\mu(q)}{q}
$$
see~\cite[Equation~(16.3.1)]{HardyWright}, and changing the order 
of summation, we write
$$
F_n(\vec{d}) = \sum_{q=1}^\infty \frac{\mu(q)}{q}  \# \cN_n(\vec{d}, q).
$$

We now proceed very similarly to the proof of Theorem~\ref{thm:SF}.

Again we define $\kappa = k/n$ and we also use the
function $\vartheta(\kappa, \varrho)$ that is given by~\eqref{eq:theta}.

Clearly, for $0 < \kappa < 1$ we have 
$$
\vartheta(\kappa, 1) =  1 - \sqrt{\kappa} > 0.
$$
Thus, using Lemma~\ref{lem:theta}, we see that for any $\kappa<  1$
we can find $\xi$ to satisfy 
\begin{equation}
\label{eq:xi11}
1> \xi > \kappa 
\end{equation}
and
\begin{equation}
\label{eq:xi22}
\vartheta(\kappa, \xi) > 0.
\end{equation}

We set 
$$
Q = n^{1/10  \kappa} \mand W = 2^{\xi n}.
$$
and  write
\begin{equation}
\label{eq:Ti}
F_n(\vec{d}) = T_1 + T_2 + T_3,
\end{equation}
where
\begin{equation*}
\begin{split}
T_1 & =\sum_{q \le Q}\frac{\mu(q)}{q}  \# \cN_n(\vec{d}, q),\\
T_2 & =\sum_{Q< q \le W}\frac{\mu(q)}{q}  \# \cN_n(\vec{d}, q),\\
T_3 & =\sum_{q > W} \frac{\mu(q)}{q}  \# \cN_n(\vec{d}, q).
\end{split}
\end{equation*}
We use Lemma~\ref{lem:Cong Small q}  for $q \le Q$, and exactly as in the 
proof of of Theorem~\ref{thm:SF} we obtain the main term
\begin{equation}
\label{eq:T1}
T_1 =     \#\cN_n(\vec{d}) \sum_{\substack{q \le Q\\ q~\text{odd}}} \frac{\mu(q)}{q^2}  + 
O\( \#\cN_n(\vec{d}) Q 2^{-\sqrt{n}}\) =\(\frac{8}{\pi^2} + o(1)\)\#\cN_n(\vec{d}),
\end{equation}
see~\cite[Theorem~280]{HardyWright}. 

To estimate $T_2$, we use  Lemma~\ref{lem:Cong Med q}  for $Q < q \le W$. 
First we note that by Lemma~\ref{lem:theta}, for $Q < q \le W$, 
we have 
$$
\vartheta(\kappa, \varrho)  \ge  \vartheta(\kappa, \xi), 
$$
where, in analogy to Lemma~\ref{lem:Cong Med q},  $\varrho$ is defined by
$q = 2^{\varrho n}$. Hence  in this range we have
$$
\# \cN_n(\vec{d}, q) \ll
 \#\cN_n(\vec{d}) q^{-\vartheta(\kappa, \varrho)} \ll \#\cN_n(\vec{d}) q^{-\vartheta(\kappa, \xi)}.
$$
Since by~\eqref{eq:xi22} we have $\vartheta(\kappa, \xi)> 0$,  we now derive
\begin{equation}
\label{eq:T2}
T_2 \ll \#\cN_n(\vec{d}) Q^{-\vartheta(\kappa, \xi)} = o\(\#\cN_n(\vec{d})\). 
\end{equation}

Finally, we use the trivial bound $\# \cN_n(\vec{d}, q) \le 2^n/q$
for $q > W$ and using~\eqref{eq:xi11} derive 
\begin{equation}
\label{eq:T3}
T_3 \ll 2^n W^{-1} \ll \#\cN_n(\vec{d}) 2^{\kappa n - \xi n} 
= o\(\#\cN_n(\vec{d})\). 
\end{equation}
Substituting~\eqref{eq:T1}, \eqref{eq:T2} and~\eqref{eq:T3} into~\eqref{eq:Ti}, we conclude the 
proof.  
 \end{proof}

\subsection{Non-residues with fixed digits}
\label{sec:nonres}
For a prime $p$, we use $\cN_n^{+}(\vec{d}, p)$ and
$\cN_n^{-}(\vec{d}, p)$ to denote the sets of $s \in \cN_n(\vec{d})$,
that are quadratic residues and non-residues, respectively
(we also use $ \cN_n^{\pm}(\vec{d}, p)$ to denote either of these sets).

\begin{theorem}
\label{thm:NonRes} For any $\varepsilon > 0$ there exists
some $\delta > 0$ such that for $k < (1/2 - \varepsilon) n$, 
$\vec{d} \in \cD_{k,n}$  and 
any prime $p$ with 
$2^{n}< p < 2^{n+1}$ 
we have
$$
\# \cN_n^{\pm}(\vec{d}, p) = \(\frac{1}{2} + O(p^{-\delta})\) \#\cN_n(\vec{d}).
$$ 
\end{theorem}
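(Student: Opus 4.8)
The plan is to detect the quadratic character via the standard identity
$$
\#\cN_n^{\pm}(\vec{d},p) = \frac{1}{2}\#\cN_n(\vec{d}) \pm \frac{1}{2}\sum_{s\in\cN_n(\vec{d})}\chi(s) + O(1),
$$
where $\chi$ is the Legendre symbol modulo $p$ (the $O(1)$ accounting for possible $s\equiv 0$), so everything reduces to a nontrivial bound on the character sum $\Sigma=\sum_{s\in\cN_n(\vec{d})}\chi(s)$. The key structural observation is that, since $\vec{d}\in\cD_{k,n}$ with $k<(1/2-\varepsilon)n$, there are $n-k>(1/2+\varepsilon)n$ free positions, and I can split them into two groups. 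Concretely, I would choose a threshold position and write each free bit as contributing either to a "high" part $a$ (ranging over a set $\cA$ of consecutive-weight combinations) or a "low" part $b$ (ranging over a set $\cB$), so that every $s\in\cN_n(\vec{d})$ decomposes as $s = a + b$ with $a\in\cA$, $b\in\cB$, and crucially $\#\cA$ and $\#\cB$ are both powers of $2$ whose product is $\#\cN_n(\vec{d})=2^{n-k}$. The point of having more than $n/2$ free bits is that I can arrange $\#\cA\ge p^{1/2+\eta}$ for some $\eta>0$ (using roughly $(1/2+\varepsilon/2)n$ of the free bits to form $a$) while still keeping $\#\cB\ge p^{\eta}$ (the remaining $\gg\varepsilon n$ free bits).

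Then $\Sigma = \sum_{a\in\cA}\sum_{b\in\cB}\chi(a+b)$, and I would invoke Corollary~\ref{cor:DoubleSums} with $r=p$: since $\#\cA\ge p^{1/2+\eta}$ and $\#\cB\ge p^{\eta}$, it gives $|\Sigma|\ll\#\cA\,\#\cB\,p^{-\delta}=\#\cN_n(\vec{d})\,p^{-\delta}$ for some $\delta>0$ depending only on $\varepsilon$. Dividing by $\#\cN_n(\vec{d})$ and recalling $2^n<p<2^{n+1}$ (so $p$ and $2^n$ are interchangeable up to constants in the exponent) yields exactly the claimed asymptotic $\#\cN_n^{\pm}(\vec{d},p)=(1/2+O(p^{-\delta}))\#\cN_n(\vec{d})$.

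The main technical point to get right is the decomposition step: I must verify that the free positions can genuinely be separated so that $s\mapsto(a,b)$ is a bijection $\cN_n(\vec{d})\to\cA\times\cB$, i.e. that the high free bits and low free bits do not interact additively with the fixed bits in a way that spoils the product structure. This is where the hypothesis $\vec{d}\in\cD_{k,n}$ (rather than $\cD_{k,n}^*$) matters only mildly — no oddness is needed — but one does need to be slightly careful: picking the cut at a free position, let $\cA=\{\,\sum_{i\in I_{\mathrm{hi}}} d_i 2^i : d_i\in\{0,1\}\,\}$ and $\cB=\{\,\sum_{i<m} d_i 2^i : d_i=\delta_i \text{ on fixed positions},\ d_i\in\{0,1\} \text{ on free positions}<m\,\}$, absorbing all fixed bits into $\cB$; then $\#\cA=2^{k_{\mathrm{hi}}}$ where $k_{\mathrm{hi}}$ is the number of high free bits, $\#\cB=2^{n-k-k_{\mathrm{hi}}}$, and $\cN_n(\vec{d})=\cA+\cB$ with all sums distinct (the $a$ and $b$ occupy disjoint bit ranges). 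The only thing to check is the counting inequality $k_{\mathrm{hi}}\ge(\log p)(1/2+\eta)/\log 2$ and $n-k-k_{\mathrm{hi}}\ge\eta\log p/\log 2$ are simultaneously satisfiable; since the total number of free bits is $n-k>(1/2+\varepsilon)n\ge(1/2+\varepsilon)(\log p)/\log 2 - O(1)$, choosing $k_{\mathrm{hi}}$ to be, say, $\lceil(1/2+\varepsilon/2)n\rceil$ leaves $\ge(\varepsilon/2)n-O(1)$ free low bits, which works with $\eta=\varepsilon/4$, say. After that, the character sum estimate is purely a black-box application of Corollary~\ref{cor:DoubleSums}, so I do not expect any further obstacle.
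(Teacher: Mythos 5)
Your proposal is correct and follows essentially the same route as the paper: decompose $\cN_n(\vec{d})$ as a carry-free sumset $\cA+\cB$ by partitioning the free positions so that $\#\cA\ge p^{1/2+\eta}$ and $\#\cB\ge p^{\eta}$, then apply Corollary~\ref{cor:DoubleSums}. The only (immaterial) difference is that the paper allocates an arbitrary set of $\lceil\varepsilon n/2\rceil$ free positions to $\cB$ and keeps the fixed digits with $\cA$, whereas you use a high/low cut and absorb the fixed digits into $\cB$; the corollary only needs the cardinalities, so both work.
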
 

\begin{proof} We select arbitrary $s= \rf{\varepsilon n/2}$ 
free positions and denote by $\cB$ the set of $2^s$ integers
with all possible combinations of digits on these positions 
and zeros on all other positions. We also define by $\cA$ 
the subset of $2^{n-k - s}$ elements 
of $\cN_n(\vec{d})$
which also have zero digits on the positions that are 
allocated to $\cB$. 
Clearly each element of $\cN_n(\vec{d})$
has a unique representation as $a+b$ with $a \in \cA$, $b \in \cB$. 
The result is now instant from Corollary~\ref{cor:DoubleSums}.
\end{proof}

\subsection{Primitive roots in Hilbert cubes}
\label{sec:Hilb}
We now present an improvement of~\cite[Theorem~1.3]{DES1}.

\begin{theorem}
\label{thm:Hilb}
We have 
$$
F(p) \le p^{3/19+o(1)}.
$$
\end{theorem}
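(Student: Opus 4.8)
The goal is to bound $F(p)$, the largest dimension of a Hilbert cube $\cH(a_0;a_1,\ldots,a_d)$ in $\F_p$ that misses a primitive root. Since a primitive root is detected by the non-vanishing of a suitable average over multiplicative characters of $p$-smooth order, the strategy is the usual dichotomy: either the cube is large enough that a direct character-sum estimate forces a primitive root to appear, or the cube is so structured (via additive combinatorics) that it contains a long arithmetic progression, on which one can apply the refined moment bound for character sums of Lemma~\ref{lem:MomentCharSums}. The exponent $3/19$ will come out of optimizing the trade-off between these two regimes.

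\medskip

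First I would suppose, for contradiction, that $d \ge p^{3/19+o(1)}$ and that $\cH = \cH(a_0;a_1,\ldots,a_d)$ contains no primitive root mod $p$. Applying Corollary~\ref{cor:subset} (or Lemma~\ref{lem:subset} directly) to the set $\cS=\{a_1,\ldots,a_d\}$, the subset-sum set $\Sigma_*(\cS)$, and hence $\cH$, has cardinality $\gg\min\{p,d^2\}$. If $d$ were as large as $p^{1/2+o(1)}$ this already forces $\#\cH \gg p$ and a character-sum argument finishes; so the interesting range is $p^{3/19} \le d \le p^{1/4}$ roughly. In that range I would instead invoke Lemma~\ref{lem:hilbcube}: choosing the integer parameter $D$ appropriately (so that $d \ge 8(p/\log p)^{1/D}$ holds, which given $d = p^{3/19+o(1)}$ forces $D \ge 19/3$, i.e. $D=7$ essentially), the cube $\cH$ contains an arithmetic progression of length $L \gg (d/\log p)^{1+1/(D-1)} = p^{(3/19)(1+1/6)+o(1)} = p^{(3/19)(7/6)+o(1)} = p^{7/38+o(1)}$. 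Pinning down which value of $D$ gives the optimum is exactly what fixes the final exponent.

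\medskip

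Next, having an arithmetic progression $\cP = \{u_0 + i\Delta : 0 \le i < L\} \subseteq \cH$ of length $L = p^{\lambda+o(1)}$ with no primitive root, I would pass to the multiplicative-character criterion for primitive roots: writing the indicator of non-primitive-roots as a sum over divisors $\ell \mid p-1$ of averages of characters of order $\ell$, the assumption that $\cP$ avoids primitive roots gives a lower bound like $L \ll \sum_{\ell} \frac{1}{\ell}\sum_{\chi \text{ of order }\ell}\left|\sum_{x \in \cP}\chi(x)\right|$, and after an affine change of variable (absorbing $\Delta$ and $u_0$ into $\chi$) each inner sum becomes $\left|\sum_{i=0}^{L-1}\chi(i+w)\right|$ for some shift $w$. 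Here I would apply Lemma~\ref{lem:MomentCharSums} with $J$ a suitable family of well-separated shifts — or rather use it to bound the single sum $\max_{h\le H}|\sum_{i\le h}\chi(i+w)|$ by $p^{1/4+1/4\nu+o(1)}H^{1-1/\nu}$ via Hölder — and then optimize over $\nu$ to get a bound of the shape $|\sum_{i<L}\chi(i+w)| \ll L^{1-c}p^{c'+o(1)}$. Combined with the number-of-divisors bound on $\sum_\ell 1$, this yields $L \ll p^{\text{something}+o(1)}$, which contradicts $L = p^{\lambda+o(1)}$ once $\lambda$ exceeds the threshold, i.e. once $d \ge p^{3/19+o(1)}$ with the chosen $D=7$.

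\medskip

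\textbf{The main obstacle.} The delicate part is the bookkeeping in the middle: one must simultaneously choose $D$ (controlling the AP length via Lemma~\ref{lem:hilbcube}) and $\nu$ (controlling the character-sum saving via Lemma~\ref{lem:MomentCharSums}) so that the two exponent constraints — "AP is long enough to host a primitive root" and "$d$ is small enough that Lemma~\ref{lem:hilbcube} with this $D$ applies" — meet exactly at $3/19$. One also has to handle the subset-sum regime $d \gg p^{1/2}$ separately and make sure the two regimes cover all $d \ge p^{3/19+o(1)}$, and to verify that the side condition $0 < D \le \sqrt{\log p/(2\log\log p)}$ in Lemma~\ref{lem:hilbcube} is comfortably satisfied for the constant $D=7$ (it is, for large $p$). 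Everything else — the character criterion for primitive roots, the divisor bound $\sum_{\ell\mid p-1}1 = p^{o(1)}$, and the affine reduction — is routine, but the optimization is where the number $3/19$ is actually produced, so I would carry it out carefully and present the resulting inequality chain explicitly.
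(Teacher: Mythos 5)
Your proposal assembles the right ingredients (Schoen's Lemma~\ref{lem:hilbcube} with $D=7$, the subset-sum bound, Shao's Lemma~\ref{lem:MomentCharSums}), but the core of your argument in the middle step does not work, and the missing idea is precisely the one that makes the paper's proof go through.

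You propose to extract a single arithmetic progression $\cP\subseteq\cH$ of length $L=p^{7/38+o(1)}$ and then derive a contradiction from the assumption that $\cP$ alone contains no primitive root, bounding the single character sum $\sum_{i<L}\chi(i+w)$ via H\"older from Lemma~\ref{lem:MomentCharSums} with $J=1$. This cannot succeed: for $J=1$ the lemma gives $\bigl|\sum_{i\le h}\chi(i+w)\bigr|\ll p^{1/4\nu+1/4\nu^2+o(1)}H^{1-1/\nu}$, which is nontrivial only for $H\gg p^{1/4+1/4\nu}$ --- the Burgess threshold. Since $7/38<1/4$, no choice of $\nu$ yields any saving on an interval of length $p^{7/38}$; indeed, showing that every progression of length $p^{7/38}$ contains a primitive root would improve the Burgess bound on the least primitive root, which the paper explicitly states it does not achieve (that would require pushing the exponent below $1/8$). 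Your separate treatment of the regime $d\gg p^{1/2}$ via subset sums does not rescue this, because the whole difficulty lives at $d\approx p^{3/19}$.

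The paper's proof instead uses \emph{both} halves of the cube simultaneously: writing $\cU=\cH(a_0;a_1,\dots,a_{d/2})$ and $\cV=\cH(0;a_{d/2+1},\dots,a_d)$, one has $\cU+\cV\subseteq\cH$, so every translate $a+\cA$ of the progression $\cA\subseteq\cU$ by an element $a\in\cV$ still avoids primitive roots. Lemma~\ref{lem:hilbcube} gives $L=\#\cA\ge p^{7/38+o(1)}$, Corollary~\ref{cor:subset} gives $\#\cV\gg d^2$, and after dividing by the common difference $\Delta$ one obtains an interval of length $L$ together with $J\gg d^2/L$ well-separated shifts $w_j$. Shao's lemma is then applied to this whole family, and the required saving $p^{1/2+1/2\nu+o(1)}L^{2\nu-2}\ll JL^{2\nu}p^{-\eta}$ amounts to $d^2L\gg p^{1/2+1/2\nu+\eta}$; with $L=p^{7/38}$ and $d=p^{3/19}$ this is exactly $p^{12/38+7/38}=p^{1/2}$, which is where the exponent $3/19$ is actually produced (not, as you suggest, from optimizing a single-interval estimate). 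You gesture at ``a suitable family of well-separated shifts'' but never identify its source; without the splitting of the cube and the sumset structure $\cU+\cV$, the argument collapses at the Burgess barrier.
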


\begin{proof} 
Let ${\cH}(a_0;a_1, \ldots, a_d)$ be a Hilbert cube, with $d$ 
distinct base
elements $a_0, \ldots, a_d \in \F_p$.
Suppose that
$ {\cH}(a_0;a_1, \ldots, a_d)$ does not contain primitive roots modulo $p$. 
We show that  $d=O(p^{3/19+o(1)})$.

As in the proof of~\cite[Theorem~1.3]{DES1}, we fix some 
$\varepsilon > 0$ and assume that 
\begin{equation}
\label{eq:d}
d \ge p^{3/19 + \varepsilon},
\end{equation}
and without loss of generality we may also assume that $d$ is even.

Let ${\cU} = {\cH}(a_0;a_1, \ldots, a_{d/2})$ and
 ${\cV} = \cH(0;a_{d/2+1}, \ldots, a_{d})$, both $\cU$ and $\cV$ understood
as subsets of $\F_p$. 
It follows from  Lemma~\ref{lem:hilbcube}, applied with $D = 7$ 
that $\cU$ contains
an arithmetic progression $\cA \subseteq  \F_p$ of length 
\begin{equation}
\label{eq:L}
L = \# \cA \geq p^{7/38+o(1)}.
\end{equation}
Let $\Delta$ be the difference between 
consecutive terms of the progression.
Let us consider the interval 
$$
\cI=\{\Delta^{-1}a~:~a\in \cA\} \subseteq   \F_p
$$
of $L$ consecutive residues modulo $p$.  

On the other hand, it follows from Corollary~\ref{cor:subset} 
that $\# \cV \gg d^2$.
Now let 
$$
\cW = \{\Delta^{-1}a~:~a\in \cV\} \subseteq   \F_p.
$$
We now take an arbitrary element $w_1 \in \cW$ and remove from 
$\cW$ at most $O(L)$ elements $w$ with $|w-w_1| \le L$ and 
denote the remaining set as $\cW_1$.
We now choose an arbitrary element $w_2 \in \cW_1$  
and remove from 
$\cW_1$ at most $O(L)$ elements $v$ with $|w-w_2| \le L$ and 
denote the remaining set as $\cW_2$. Continuing, we obtain a set 
$\{w_1, \ldots, w_J\}$ of 
\begin{equation}
\label{eq:J}
J \gg \# \cW/L = \# \cV/L \gg d^2/L
\end{equation}
elements, which after renumbering 
satisfy the condition of Lemma~\ref{lem:MomentCharSums}.

By Lemma~\ref{lem:MomentCharSums} and \eqref{eq:J}, taking a sufficiently 
large $\nu$ after simple calculations we obtain that for any 
non-trivial multiplicative character  $\chi$ of $\F_p$ we have 
$$
 \sum_{j=1}^{J-1}  \left|\sum_{i=1}^{L} \chi\(i+w_j\)\right|^{2\nu} 
\ll p^{1/2 + 1/2\nu + o(1)}L^{2\nu - 2} \ll JL^{2\nu} p^{-\eta},
$$
provided that
$$
  L^{1/2} d \gg p^{1/4+\varepsilon}.
$$
Recalling \eqref{eq:d} and \eqref{eq:L}, we find that the latter condition
is satisfied.
Expressing, in a standard fashion, 
the counting function for primitive roots among the elements 
$i+w_j$, $i =1, \ldots, L$, $j = 1, \ldots, J$, via multiplicative characters,
see, for example,~\cite[Lemma~2.4]{DES1}, we see that it is positive, 
provided $p$ is large enough. 
Since $\varepsilon$ is arbitrary, 
we obtain the desired result. 
 \end{proof}

\subsection{Elements with restricted digits in finite fields}
\label{sec:rest dig}

Our next result improves~\cite[Theorem~2.1]{DMS} for any  $n \ge 3$:

For $\cW_\fA$,
% given by~\eqref{eq:Sets Ai} and~\eqref{eq:Set WA},
given by~\eqref{eq:Set WA}, 
we use $\cW_\fA^{+}$ and
$\cW_\fA^{-}$ to denote the sets of $w \in \cW_\fA$,
that are quadratic residues and non-residues, respectively
(we also use $\cW_\fA^{\pm}$ to denote either of these sets).

\begin{theorem}
\label{thm:Nonres W}
Let $n \ge 2$. 
For any $\varepsilon > 0$ there is some $\delta > 0$ 
such that  for an  arbitrary
basis $\omega_1, \ldots, \omega_n$ of $\F_{p^n}$ 
over $\F_p$ and a collection of $n$ sets 
$$
\fA = \{\cA_i\subseteq \F_p~:~i=1, \ldots, n\},
$$
satisfying
\begin{equation}
\label{eq:DES Cond1}
 \prod_{1 \le i \le n}\# \cA_i  \ge  p^{(1/2 + \varepsilon)n^2/(n-1)}
\end{equation}
and
\begin{equation}
\label{eq:DES Cond2}
\min_{1 \le i \le n}\# \cA_i  \ge  p^\varepsilon 
\end{equation}
the following holds: for the set
$$
\cW_\fA = \left\{a_1 \omega_1+\ldots+a_n \omega_n~:~ 
a_i \in \cA_i, \ i=1, \ldots, n\right\},
$$
we have
$$
\# \cW_\fA^{\pm} = \(\frac{1}{2} + O(p^{-\delta})\) \# \cW_\fA
$$
uniformly over $n$ and $p$. 
\end{theorem}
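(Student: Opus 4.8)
The plan is to reduce the claim to the double character sum estimate of Corollary~\ref{cor:DoubleSums}, applied in the finite field $\F_r$ with $r = p^n$ (we may plainly assume $p$ is odd). Let $\chi$ be the nonprincipal quadratic character of $\F_{p^n}^*$, extended by $\chi(0) = 0$. Writing the counting functions of quadratic residues and non-residues through $\chi$ in the usual way gives
$$
\# \cW_\fA^{\pm} = \frac12 \# \cW_\fA + O(1) \pm \frac12 \sum_{w \in \cW_\fA} \chi(w),
$$
so it is enough to show that $\sum_{w \in \cW_\fA} \chi(w) \ll \# \cW_\fA \, p^{-\delta}$ for some $\delta > 0$ depending only on $\varepsilon$ and $n$.

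The heart of the matter is to exhibit $\cW_\fA$ as a sumset of the shape demanded by Corollary~\ref{cor:DoubleSums}. First I would choose an index $i_0$ with $\# \cA_{i_0} = \min_{1 \le i \le n} \# \cA_i$, so that $(\# \cA_{i_0})^n \le \prod_{i=1}^n \# \cA_i$; then by~\eqref{eq:DES Cond1},
$$
\prod_{i \ne i_0} \# \cA_i = \frac{\prod_{i=1}^n \# \cA_i}{\# \cA_{i_0}}
\ge \Bigl( \prod_{i=1}^n \# \cA_i \Bigr)^{(n-1)/n}
\ge p^{(1/2 + \varepsilon) n} = r^{1/2 + \varepsilon}.
$$
Now set $\cA = \bigl\{ \sum_{i \ne i_0} a_i \omega_i~:~ a_i \in \cA_i \bigr\}$ and $\cB = \{ a_{i_0} \omega_{i_0}~:~ a_{i_0} \in \cA_{i_0} \}$, both regarded as subsets of $\F_r$. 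Because $\omega_1, \ldots, \omega_n$ is a basis of $\F_{p^n}$ over $\F_p$, each $w \in \cW_\fA$ is uniquely of the form $a + b$ with $a \in \cA$, $b \in \cB$; hence $\# \cW_\fA = \# \cA \cdot \# \cB$ and $\sum_{w \in \cW_\fA} \chi(w) = \sum_{a \in \cA} \sum_{b \in \cB} \chi(a + b)$.

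I would then invoke Corollary~\ref{cor:DoubleSums} with $\eta = \varepsilon / n$: the bound above gives $\# \cA \ge r^{1/2 + \varepsilon} \ge r^{1/2 + \eta}$, while~\eqref{eq:DES Cond2} gives $\# \cB = \# \cA_{i_0} \ge p^{\varepsilon} = r^{\varepsilon/n} = r^{\eta}$. The corollary produces $\delta_0 > 0$, depending only on $\eta$ and hence on $\varepsilon$ and $n$, with
$$
\sum_{a \in \cA} \sum_{b \in \cB} \chi(a + b) \ll \# \cA \, \# \cB \, r^{-\delta_0} = \# \cW_\fA \, p^{-n \delta_0}.
$$
Choosing $\delta = n \delta_0$ (and shrinking it if necessary to absorb the $O(1)$ term, which is legitimate since $\# \cW_\fA$ is a fixed positive power of $p$ by~\eqref{eq:DES Cond1}) and substituting back finishes the proof.

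I expect no genuine difficulty here: beyond the routine expansion into a character sum, the only substantive step is the pigeonhole computation in the second paragraph. It works precisely because the exponent $n^2/(n-1)$ in~\eqref{eq:DES Cond1} is tuned so that after discarding the smallest of the $n$ factors $\# \cA_i$ — which costs at most an $n$-th root of the product — the surviving product still clears the threshold $r^{1/2 + \varepsilon}$, whereas~\eqref{eq:DES Cond2} takes care of the secondary requirement $\# \cB \ge r^{\eta}$. The dependence of $\delta$ and of the implied constants on $n$ enters only through the parameter $\nu = \rf{\eta^{-1}} = \rf{n/\varepsilon}$ carried over from Lemma~\ref{lem:DoubleSums}, consistent with the paper's conventions.
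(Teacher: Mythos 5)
Your reduction to Corollary~\ref{cor:DoubleSums} is the right idea, and your pigeonhole step is correct: discarding the smallest factor costs at most an $n$-th root of the product, so $\#\cA=\prod_{i\ne i_0}\#\cA_i\ge r^{1/2+\varepsilon}$ with $r=p^n$, and the unique-representation and character-sum expansions are fine. For each \emph{fixed} $n$ this proves the asymptotic. The genuine gap is the uniformity in $n$ that the theorem asserts (``uniformly over $n$ and $p$'', with $\delta$ depending only on $\varepsilon$): by putting only the single smallest set into $\cB$ you can only guarantee $\#\cB\ge p^{\varepsilon}=r^{\varepsilon/n}$, so you are forced to take $\eta=\varepsilon/n$ in Corollary~\ref{cor:DoubleSums}. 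Then $\nu=\rf{\eta^{-1}}\asymp n/\varepsilon$, the saving $\delta_0$ from the corollary is of order $\eta^2\asymp\varepsilon^2/n^2$, so even after converting back to powers of $p$ your $\delta=n\delta_0\asymp\varepsilon^2/n\to 0$, and the implied constant (which in the Weil--H\"older argument grows with $\nu$) blows up as $n\to\infty$. You acknowledge this dependence in your last paragraph, but it is exactly what the statement rules out.

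The paper's proof is built to avoid this. For $n$ larger than a threshold $n_0(\varepsilon)=\rf{4\varepsilon^{-1}}$ it places not one but roughly $\varepsilon n/4$ of the smallest coordinate sets into $\cB$, so that by~\eqref{eq:DES Cond2} one gets $\#\cB\ge p^{\varepsilon^2 n/4}=r^{\varepsilon^2/4}$ --- a power of $r$ bounded below independently of $n$ --- while the remaining $m=\rf{\frac{1+\varepsilon}{1+2\varepsilon}n}$ largest sets still give $\#\cA\ge r^{1/2+\varepsilon/2}$ (the exponent $\frac{1+\varepsilon}{1+2\varepsilon}$ is tuned so that $(1/2+\varepsilon)\cdot\frac{1+\varepsilon}{1+2\varepsilon}=1/2+\varepsilon/2$ exactly). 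Corollary~\ref{cor:DoubleSums} is then applied with $\eta=\varepsilon^2/4$, independent of $n$. For the finitely many $n\le n_0(\varepsilon)$ one can argue essentially as you do (the paper takes $m=n-1$), with $\eta=\varepsilon/n_0(\varepsilon)$, again depending only on $\varepsilon$. To repair your argument you should replace your singleton choice of $\cB$ by this proportional split when $n$ is large; the rest of your write-up then goes through unchanged.
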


\begin{proof}Without loss of generality, we can assume that 
$\varepsilon \le 1/2$. 
We also set
$$
n_0(\varepsilon) = \rf{4\varepsilon^{-1}}. 
$$

We first  
consider the case when 
\begin{equation}
\label{eq:large n}
n >  n_0(\varepsilon).
\end{equation}

Assuming that~\eqref{eq:large n} holds, we set
$$
m = \rf{\frac{1 + \varepsilon}{1+2\varepsilon} n}.
$$
By choosing $\cI$ such that
 the sets $\cA_i$ with $i \in \cI$ are the $m$ sets of largest cardinality
we see that
\begin{equation}
\label{eq:large Ai}
 \prod_{i\in\cI }\# \cA_i  \ge  
 \(\prod_{1 \le i \le n}\# \cA_i\)^{m/n} . 
\end{equation}
We then  define $\cJ = \{1, \ldots, n\} \setminus \cI$ and 
\begin{equation*}
\begin{split}
\cA & = \left\{\sum_{i\in \cI} a_i \omega_i~:~ 
a_i \in \cA_i, \ i\in \cI\right\}, \\
\cB & = \left\{\sum_{i \in \cJ} a_i \omega_i~:~ 
a_i \in \cA_i, \ i\in \cJ\right\}.
\end{split}
\end{equation*}
Thus, recalling~\eqref{eq:DES Cond1} and~\eqref{eq:large Ai}, we see that 
\begin{equation}
\label{eq:Card A}
\begin{split}
\# \cA  \ge   \(p^{(1/2 + \varepsilon)n^2/(n-1)}\)^{m/n}
\ge     \(p^{(1/2 + \varepsilon)n}\)^{m/n}\ge
 p^{(1/2 + \varepsilon)m} \ge  p^{(1/2 + \varepsilon/2)n} .
\end{split}
\end{equation}
Furthermore for $n > n_0(\varepsilon)$ we have
\begin{equation*}
\begin{split}
\#\cJ & = n-m   \ge n - \frac{1 + \varepsilon}{1+2\varepsilon} n -1
= n - \frac{(1 + \varepsilon)n + 1+2\varepsilon}{1+2\varepsilon} \\
&\ge  n - \frac{(1 + \varepsilon)n + 2}{1+2\varepsilon}  
=  \frac{ \varepsilon n - 2}{1+2\varepsilon}  \ge 
 \frac{ \varepsilon n - 2}{2} \ge \varepsilon n/4.
\end{split}
\end{equation*}
Therefore, recalling~\eqref{eq:DES Cond2} again, we see that
\begin{equation}
\label{eq:Card B}
\begin{split}
\# \cB \ge p^{\varepsilon^2 n/4}.
\end{split}
\end{equation}
As $\{\omega_1, \ldots, \omega_n\}$ is a basis,
every element $w = \cW_\fA$ has a unique
representation $w =a + b$ with $a \in \cA$, $b \in \cB$.
Hence for any multiplicative character $\chi$ of $\F_r$ we have 
$$
\sum_{w\in \cW_\fA} \chi(w)  
= \sum_{a\in \cA}\sum_{b \in \cB} \chi(a+b) .
$$
Now using the bounds~\eqref{eq:Card A} 
and applying~\eqref{eq:Card B} and  applying Corollary~\ref{cor:DoubleSums}
with $\eta = \varepsilon^2/4$, we obtain 
\begin{equation}
\label{eq:Bound W_A}
\sum_{w\in \cW_\fA} \chi(w)   \ll \#\cA \# \cB   p^{-\delta}
=  \#\cW_\fA    p^{-\delta}
\end{equation}
for any non-trivial multiplicative character  $\chi$ 
of $\F_{p^n}$ where $\delta > 0$ depends only on $\varepsilon$. 
Thus, taking the quadratic character $\chi$, we obtain the desired result
if the inequality~\eqref{eq:large n} holds.

Now, for small values of $n$, for which inequality~\eqref{eq:large n} fails,
we simply take $m = n-1$ and choose $\cI$ as before, 
to satisfy~\eqref{eq:large Ai}. 
Hence, instead of~\eqref{eq:Card A} we have
$$
\# \cA  \ge  p^{(1/2 + \varepsilon)n} 
$$
and also trivially, we have 
$$
\#\cB \ge  p^{ \varepsilon} \ge  p^{\varepsilon n/n_0(\varepsilon)}.
$$
Applying Corollary~\ref{cor:DoubleSums}
with $\eta =\varepsilon/n_0(\varepsilon)$, we obtain the bound~\eqref{eq:Bound W_A}
again, which concludes the proof. 
\end{proof}
 
\section{Comments}

We remark that motivated by a question of Erd{\H o}s, Mauduit and
S{\'a}rk{\"o}zy~\cite[Problem~5]{EMS}, Banks and Shparlinski~\cite{BaSh1}
have studied 
the average value of $\varphi(s)/s$ over
integers with some digital restrictions, different from that of Theorem~\ref{thm:AverEuler},
see also~\cite{BaSh2}. 
As in~\cite{BaSh2}, one can also study the average value of $\sigma(s)/s$ for the sums
of divisors function and obtain a full analogue of Theorem~\ref{thm:AverEuler}
for this function. 
Furthermore, our argument can be used to give an asymptotic formula 
for the number of  pairs $(s,r)$ with $r,s \in \cN_n(\vec{d})$ such that 
$\gcd(s,r)=1$. 

Clearly,   the bound~\eqref{eq:Bound W_A} also allows to study 
the distribution of primitive roots in the set $\cW_\fA$. Finally, 
we note that the case when the sets $\cA_1, \ldots, \cA_n$ are 
sets of consecutive residues corresponds to the settings 
of~\cite[Theorem~3.1]{DMS}. In this case,
one can use the recent generalisations
of the Burgess bounds that are due to Chang~\cite{Chang1,Chang2} 
and Konyagin~\cite{Kon2} (when $n$ is bounded) together with 
a classical bound of Davenport and Lewis~\cite{DaLe}
(when $n \to \infty$) and improve the result of~\cite[Theorem~3.1]{DMS}.

\section*{Acknowledgements}

The authors would like to thank Sergei Konyagin for helpful discussions and in particular for his suggestion to use the result of Shao [35] in the proof of Theorem~\ref{thm:Hilb}.

 The authors are also very grateful to
C\'{e}cile Dartyge and to the referee  for 
 careful reading of the manuscript and useful suggestions.

Most of this work originated at the Erwin Schr{\"o}dinger International Institute for Mathematical Physics,
Vienna, and then was continued during a very enjoyable stay of the authors
at the CIRM, Luminy. The second  author was partially supported by the FWF (Austria) grant W1230.
The third author was partially  supported by 
the CIRM-CNRS-SMF (France) as a Jean-Morlet Chair  and ARC (Australia) grant  
DP140100118.

\end{document}